\theoremstyle{plain}
\newtheorem{theorem}{Theorem}
\newtheorem{corollary}{Corollary}
\newtheorem{proposition}{Proposition}
\theoremstyle{definition}
\newtheorem{example}{Example}
\theoremstyle{remark}
\newtheorem{remark}{Remark}
\DeclareMathOperator{\co}{co}
\DeclareMathOperator{\cl}{cl}
\DeclareMathOperator{\dom}{dom}
\DeclareMathOperator*{\argmin}{arg\,min}
\DeclareMathOperator{\dist}{dist}
\author{Dolgopolik M.V.\footnote{Institute for Problems in Mechanical Engineering of the Russian Academy of Sciences,
Saint Petersburg, Russia}}
\title{New global optimality conditions for nonsmooth DC optimization problems}
\begin{document}

\maketitle

\begin{abstract}
In this article we propose a new approach to an analysis of DC optimization problems. This approach was largely
inspired by codifferential calculus and the method of codifferential descent, and is based on the use of a so-called
affine support set of a convex function instead of the Frenchel conjugate function. With the use of affine support sets
we define a global codifferential mapping of a DC function and derive new necessary and sufficient global optimality
conditions for DC optimization problems. We also provide new simple necessary and sufficient conditions for the global
exactness of the $\ell_1$ penalty function for DC optimization problems with equality and inequality constraints and
present a series of simple examples demonstrating a constructive nature of the new global optimality conditions. These
examples show that when the optimality conditions are not satisfied, they can be easily utilised in order to find
``global descent'' directions of both constrained and unconstrained problems. As an interesting theoretical example, we
apply our approach to the analysis of a nonsmooth problem of Bolza.
\end{abstract}

\section{Introduction}

For about thirty years DC optimization has been one of the most active areas of research in nonconvex optimization due
the abundance of applications and a possibility of the use of the well-developed apparatus of convex analysis and
convex optimization \cite{Tuy95_Collect,Tuy_book,HorstThoai,HorstPardalosThoai,Tuy_2000,LeThiDinh}. Various local
search 
\cite{Strekalovsky,BagUgon,TorBagKar,Strekalovsky2015,JokiBagirov,GaudiosoBagirov,JokiBagiov2,LeThiDinh,Polyakova2019}
and global search
\cite{Tuy95,Blanquero,Tuy2003,Strekalovsky2003,Ferrer,BigiFrangioni,BigiFrangioni2,StrekalovskyYanulevich}
methods for solving smooth and nonsmooth DC optimization problems were proposed over the years. It should be noted
that global search methods are often based on global optimality conditions, which have attracted a lot of attention of
researchers
\cite{Tuy87,Tuy87_Collect,HiriartUrruty_GlobalDC,HiriartUrruty_95,JeyakumarGlover,HiriartUrruty_98,DurHorstLocatelli,
Strekalovsky98,Tuy2003,Singer,DinhNghiaVallet,Polyakova,Zhang2013,Strekalovsky2017,Strekalovsky2017_2}.

The main goal of this article is to present new necessary and sufficient global optimality conditions for nonsmooth DC
optimization problems, including problems with DC equality and DC inequality constraints. These optimality conditions
were largely inspired by the codifferential calculus developed by professor V.F. Demyanov
\cite{Demyanov_InCollection_1988,Demyanov1988,Demyanov1989,DemRub_book} and are intimately connected to the method of
codifferential descent
\cite{DemRub_book,DemBagRub,BagUgon,TorBagKar,Dolgopolik_CodiffDescent,Dolgopolik_CodiffDescent_Global}. To obtain new
global optimality conditions, we introduce and study a so-called \textit{affine support set} of a proper closed convex
function. It should be noted that this set has been somewhat implicitly used in multiple monographs and papers on convex
analysis and optimization (see, e.g., \cite[Sect.~I.3]{EkelandTemam}, \cite[Theorem~1.3.8]{LemarechalII},
\cite[Sect.~7.3.3]{Rubinov}, etc.). However, to the best of author's knowledge, its properties have not been
thoroughly investigated earlier.

Affine support sets of convex functions play the same role in the non-positively homogeneous
case, as subdifferentials play in Minkowski duality. Furthermore, they are closely related to the abstract convexity
theory \cite{Rubinov} and Fenchel conjugate functions. In particular, almost all results on affine support sets
have natural counterparts in terms of Fenchel conjugate functions. However, the use of affine support sets provides one
with a new perspective on convex and DC functions, which allowed us to obtain a new result on convex functions
(part~\ref{Stat_Nonnegative} of Proposition~\ref{Thrm_MinViaGlobalHypodiff}). This result is a key ingredient in our
derivation of new global optimality conditions for DC optimization problems.

With the use of affine support sets we define a global codifferential mapping of a DC function, which can be viewed as
a ``globalization'' of Demyanov's definition of codifferential \cite{DemRub_book}. We provide some simple calculus rules
for global codifferentials that are particularly useful in the piecewise affine case. Furthermore, we utilise global
codifferentials and some results on affine support sets in order to obtain new necessary and sufficient global
optimality conditions for nonsmooth DC optimization problems in terms of global codifferentials (different global
optimality conditions in terms of codifferentials in the piecewise affine case were obtained by Polyakova
\cite{Polyakova}). It turns out that these condition are implicitly incorporated into the method of codifferential
descent (see Remark~\ref{Remark_GlobalOptimalityConditions} below and 
\cite{Dolgopolik_CodiffDescent,Dolgopolik_CodiffDescent_Global}) and have a somewhat constructive nature in the
piecewise affine case. Namely, we present a series of simple examples demonstrating that the verification of the global
optimality conditions at a non-optimal point allows one to find ``global descent'' directions, which sometimes lead
directly towards a global minimizer. In order to apply new global optimality conditions to problems with DC equality and
DC inequality constraints we obtain new simple necessary and sufficient conditions for the global exactness of the
$\ell_1$ penalty function. Finally, as an interesting theoretical example, in the end of the paper we apply some results
on global codifferentials of DC functions to an analysis of a nonsmooth problem of Bolza.

It should be noted that in many cases it is difficult to verify the global optimality conditions obtained in this
paper, since it is often difficult to compute a global codifferential of a DC function explicitly. However, a similar
statement is true for many other global optimality conditions for general DC optimization problems. Nevertheless, it
seems possible to design new numerical methods for DC optimization problem utilising a certain approximation of global
codifferential (cf. codifferential method in \cite{BagUgon}, aggregate codifferential method in \cite{TorBagKar}, and
Example~\ref{Example_NonPiecewiseAff_1} below).

The paper is organised as follows. In Section~\ref{Sect_AffineSupport} we introduce an affine support set of a convex
function, study its properties, and point out its connection with the Fenchel conjugate function.
Section~\ref{Sect_GlobOptCond} is devoted to necessary and sufficient global optimality conditions for nonsmooth DC
optimization problems in terms of global codifferentials. In this section, we also present a series of simple examples
demonstrating a somewhat constructive nature of the global optimality conditions and obtain simple conditions for the
global exactness of the $\ell_1$ penalty function for DC optimization problems with equality and inequality constrains.
Some connections of the global optimality conditions obtained in this paper with KKT optimality conditions and
global optimality conditions in terms of $\varepsilon$-subdifferentials are discussed in
Section~\ref{Sect_OtherOptimalityCond}. Finally, different global optimality conditions in terms of global
codifferentials and their application to an analysis of a nonsmooth problem of Bolza are given in
Section~\ref{Sect_Bolza}.

For the sake of simplicity, in this paper we study DC functions defined on a real Hilbert space. However, it should be
noted that most of the results of Sections~\ref{Sect_AffineSupport} and \ref{Sect_Bolza} (except for
part~\ref{Stat_Nonnegative} of Proposition~\ref{Thrm_MinViaGlobalHypodiff} and
Proposition~\ref{Theorem_MinOnConvSetViaHypodiff}) can be easily extended to the case of locally convex spaces, while
the
rest of the results of this paper (apart from Theorem~\ref{Thrm_GlobalExactPenFunc_DCProblem}) remain valid in strictly
convex reflexive Banach spaces.

\section{Affine support sets of convex functions}
\label{Sect_AffineSupport}

In this section we introduce and study a so-called affine support set of a closed convex function. The main ideas and
results presented below, in a sense, can be viewed as a natural extension of the Minkowski duality to the case of
general, i.e. non-positively homogeneous convex functions (cf. the abstract convexity theory in~\cite{Rubinov}). 

Let $\mathcal{H}$ be a real Hilbert space, $\overline{\mathbb{R}} = \mathbb{R} \cup \{ \pm \infty \}$, and 
$f \colon \mathcal{H} \to \overline{\mathbb{R}}$ be a proper closed convex function. As is well known 
(see, e.g. \cite[Prp.~I.3.1]{EkelandTemam}), the function $f$ can be represented as the supremum of a family of affine
functions. Taking, if necessary, the closed convex hull of this set, and identifying an affine function 
$l(x) = a + \langle v, x \rangle$ with the point $(a, v) \in \mathbb{R} \times \mathcal{H}$, one gets that there exists
a closed convex set $S_f \subset \mathbb{R} \times \mathcal{H}$ such that
$$
  f(x) = \sup_{(a, v) \in S_f} ( a + \langle v, x \rangle ) \quad \forall x \in \mathcal{H},
$$
where $\langle \cdot, \cdot \rangle$ is the inner product in $\mathcal{H}$. Any such set $S_f$ is called 
\textit{an affine support set} of the function $f$. At first, let us demonstrate how affine support sets are connected
with the $\varepsilon$-subdifferential of the function $f$.

\begin{proposition} \label{Thrm_SubdiffViaSupportSet}
For any affine support set $S_f$ of $f$ and for all  $\varepsilon \ge 0$ and $x \in \dom f$ one has
\begin{equation} \label{SubdiffViaSupportSet}
  \partial_{\varepsilon} f(x) = \big\{ v \in \mathcal{H} \bigm| 
  \exists a \in \mathbb{R} \colon (a, v) \in S_f, \: a + \langle v, x \rangle \ge f(x) - \varepsilon \big\}.
\end{equation}
\end{proposition}

\begin{proof}
Fix arbitrary $\varepsilon \ge 0$ and $x \in \dom f$, and denote by $D_{\varepsilon}(x)$ the set on the right-hand side
of \eqref{SubdiffViaSupportSet}. Observe that for any $(a, v) \in S_f$ such that 
$a + \langle v, x \rangle \ge f(x) - \varepsilon$ one has
$$
  f(y) - f(x) \ge a + \langle v, y \rangle - \big( a + \langle v, x \rangle \big) - \varepsilon 
  = \langle v, y - x \rangle - \varepsilon
  \quad \forall y \in \mathcal{H},
$$
which implies that $v \in \partial_{\varepsilon} f(x)$. 
Thus, $D_{\varepsilon}(x) \subseteq \partial_{\varepsilon} f(x)$. 

Arguing by reductio ad absurdum, suppose that $\partial_{\varepsilon} f(x) \ne D_{\varepsilon}(x)$. Then there exists
$v_0 \in \partial_{\varepsilon} f(x)$ such that $v_0 \notin D_{\varepsilon}(x)$. Hence $(a, v_0) \notin S_f$ for 
any $a \ge f(x) - \langle v_0, x \rangle - \varepsilon$, since otherwise $v_0 \in D_{\varepsilon}(x)$.

Denote $C_f = \{ (b, v) \in \mathbb{R} \times \mathcal{H} \mid \exists a \ge b \colon (a, v) \in S_f \}$. It
is clear that the set $C_f$ is convex, and $(f(x) - \langle v_0, x \rangle - \varepsilon, v_0) \notin C_f$. To apply 
the separation theorem, let us check that the set $C_f$ is closed. To this end, introduce a function 
$g \colon \mathcal{H} \to \mathbb{R}$ as follows: $g(v) = \sup\{ a \mid (a, v) \in S_f \}$. Observe that 
$(g(v), v) \in S_f$ for any $v \in \dom g$ due to the fact that the set $S_f$ is closed. Moreover, it is easy to see
that $C_f$ is the hypograph of the function $g$. Therefore, it is sufficient to check that the function $g$ is upper
semicontinuous (u.s.c.).

At first, note that $g$ is a proper concave function, since its hypograph is a convex set, and if $g(v) = + \infty$ for
some $v$ (i.e. $(a, v) \in S_f$ for any sufficiently large $a$), then $f(\cdot) \equiv + \infty$, which contradicts the
assumption that the function $f$ is proper. Note also that $g(\cdot) \not\equiv - \infty$, since otherwise 
$S_f = \emptyset$ and $f(\cdot) \equiv - \infty$, which contradicts our assumption. Furthermore, $g$ is bounded above
on any bounded set. Indeed, for any bounded set $Q \subset \mathcal{H}$ and $v \in Q$ either 
$(\mathbb{R} \times \{ v \}) \cap S_f = \emptyset$ and $g(v) = - \infty$ or $(a, v) \in S_f$ for some 
$a \in \mathbb{R}$, and
\begin{align*}
  g(v) &= \sup\{ a \mid (a, v) \in S_f \} 
  = \sup_{a \colon (a, v) \in S_f } \big( a + \langle v, x \rangle - \langle v, x \rangle \big) \\
  &\le \sup_{(a, v) \in S_f} \big( a + \langle v, x \rangle \big) - \langle v, x \rangle
  \le f(x) + q \| x \|,
\end{align*}
where $q = \sup_{v \in Q} \| v \|$ (recall that $x \in \dom f$, i.e. $f(x) < + \infty$).

Arguing by reductio ad absurdum suppose that $g$ is not u.s.c. at a point $v \in \mathcal{H}$. Let $v \in \dom g$. Then
there exists $\theta > 0$ such that for any $n \in \mathbb{N}$ one can find $v_n \in \dom g$ 
for which $g(v_n) > g(v) + \theta$ and $\| v_n - v \| < 1 / n$. Taking into account the fact that $g$ is bounded above
on bounded sets one gets that the sequence $\{ g(v_n) \}$ is bounded. Therefore, there exists a subsequence 
$\{ v_{n_k} \}$ such that the corresponding subsequence $\{ g(v_{n_k}) \}$ converges to some $g_* \ge g(v) + \theta$.
As was poited out above, $(g(v_{n_k}), v_{n_k}) \in S_f$ for all $k \in \mathbb{N}$. Hence passing to the limit as 
$k \to \infty$ and applying the closedness of the set $S_f$ one obtains that $(g_*, v) \in S_f$. Consequently, 
$g(v) \ge g_* \ge g(v) + \theta$, which is impossible.

Let now $v \notin \dom g$. Then there exist $M \in \mathbb{R}$ and a sequence $\{ v_n \} \subset \dom g$ converging to
$v$ such that $g(v_n) \ge M$ for all $n \in \mathbb{N}$. Applying, as above, the fact that the sequence 
$\{ g(v_n) \}$ is bounded one can extract a subsequence $\{ v_{n_k} \}$ such that the sequence $\{ g(v_{n_k}) \}$
converges to some $g_* \ge M > - \infty$. Therefore $(g_*, v) \in S_f$, and $g(v) \ge g_* > - \infty$, which is
impossible. Thus, $g$ is u.s.c., and the set $C_f$ is closed.

Recall that $(f(x) - \langle v_0, x \rangle - \varepsilon, v_0) \notin C_f$, and $C_f$ is a closed convex set. Applying
the separation theorem one obtains that there exist $(b, y) \in \mathbb{R} \times \mathcal{H}$ and $\delta > 0$ such
that
\begin{equation} \label{SubdiffVsSupport_Separation}
  b (f(x) - \langle v_0, x \rangle - \varepsilon) + \langle v_0, y \rangle \ge
  b a + \langle v, y \rangle + \delta \quad \forall (a, v) \in C_f.
\end{equation}
By definition for any $(a, v) \in S_f$ one has $(- \infty, a] \times \{ v \} \subset C_f$, which implies that 
$b \ge 0$. 

If $b > 0$, then dividing \eqref{SubdiffVsSupport_Separation} by $b$ and taking the supremum over all $(a, v) \in S_f$
one obtains
$$
  f(x) + \left\langle v_0, \frac{1}{b}y - x \right\rangle - \varepsilon 
  \ge f\left( \frac{1}{b}y \right) + \frac{\delta}{b}.
$$
Recall that $v_0 \in \partial_{\varepsilon} f(x)$. Therefore 
$$
  f\left( \frac{1}{b}y \right) \ge f(x) + \left\langle v_0, \frac{1}{b} y - x \right\rangle - \varepsilon
  \ge f\left( \frac{1}{b}y \right) + \frac{\delta}{b},
$$
which is impossible. Thus, $\partial_{\varepsilon} f(x) = D_{\varepsilon}(x)$.

Suppose now that $b = 0$. Then \eqref{SubdiffVsSupport_Separation} implies that
\begin{multline} \label{DirectDerivSeparationTh_Estimate}
  \frac{f(x + \alpha y) - f(x)}{\alpha} 
  = \frac{1}{\alpha} \Big( \sup_{(a, v) \in S_f} \big( a + \langle v, x + \alpha y \rangle \big) - f(x) \Big) \\
  \le \frac{1}{\alpha} \Big( \sup_{(a, v) \in S_f} (a + \langle v, x \rangle \rangle) 
  + \alpha \langle v_0, y \rangle - \alpha \delta - f(x) \Big) = \langle v_0, y \rangle - \delta
\end{multline}
for any $\alpha > 0$. On the other hand, by the definition of $\varepsilon$-subgradient for any 
$\alpha > \varepsilon / \delta$ one has
$$
  \frac{f(x + \alpha y) - f(x)}{\alpha} \ge \langle v_0, y \rangle - \frac{\varepsilon}{\alpha}
  > \langle v_0, y \rangle - \delta,
$$
which contradicts \eqref{DirectDerivSeparationTh_Estimate}. Thus, $\partial_{\varepsilon} f(x) =  D_{\varepsilon}(x)$,
and the proof is complete.	
\end{proof}

\begin{remark} \label{Remark_AffSupp_SupAttained}
By the proposition above the supremum in the definition of affine support set is attained for some $x \in \dom f$ iff
$f$ is subdifferentiable at $x$. In particular, if $f$ is finite-valued, then the supremum in the definition of affine
support set is attained for any $x \in \mathcal{H}$ by \cite[Proposition~I.5.2 and Corollary~I.2.5]{EkelandTemam}.
\end{remark}

Let $S_f$ be any affine support set of $f$. Our aim now is to show that several important properties of the function
$f$, such as boundedness below and the attainment of minimum, can be described in terms of simple geometric properties
of the set $S_f$. 

Observe that if $f$ attains a global minimum at a point $x_*$, then $0 \in \partial f(x_*)$, and 
$(f(x_*), 0) \in S_f$ by Proposition~\ref{Thrm_SubdiffViaSupportSet}. Thus, the sets $\mathbb{R} \times \{ 0 \}$ and
$S_f$ intersect. In the general case, define $a_f = \sup_{(a, 0) \in S_f} a$. By definition $a_f = - \infty$, if the
sets
$\mathbb{R} \times \{ 0 \}$ and $S_f$ do not intersect. Note also that if they do intersect, then $(a_f, 0) \in S_f$ due
to the facts that (i) this intersection is obviously closed, and (ii) if $a_f = + \infty$, then $f(\cdot) \equiv +
\infty$, which contradicts the assumption that the function $f$ is proper.

Denote by $N_f = \{ (b, w) \in \mathbb{R} \times \mathcal{H} \mid b( a - a_f ) + \langle w, v \rangle \le 0 \; 
\forall (a, v) \in S_f \}$ the normal cone to the set $S_f$ at the point $(a_f, 0)$, if the sets 
$\mathbb{R} \times \{ 0 \}$ and $S_f$ intersect, and define $N_f = \emptyset$ otherwise. From this point onwards we
suppose that the space $\mathbb{R} \times \mathcal{H}$ is endowed with the norm 
$\| (a, v) \| = \sqrt{a^2 + \| v \|^2}$.

\begin{proposition} \label{Thrm_MinViaGlobalHypodiff}
The following statements hold true:
\begin{enumerate}
\item{$f$ is bounded below iff $S_f \cap (\mathbb{R} \times \{ 0 \}) \ne \emptyset$;
\label{Stat_BoundedBelow}}

\item{if $f$ is bounded below, then $a_f = \inf_{x \in \mathcal{H}} f(x)$;
\label{Stat_InfValue}}

\item{$f$ attains a global minimum iff there exists $(b, w) \in N_f$ such that $b > 0$; furthermore,
$\argmin_{x \in \mathcal{H}} f(x) = \{ b^{-1} w \in \mathcal{H} \mid (b, w) \in N_f \colon b > 0 \}$;
\label{Stat_AttainmentOfMin}}

\item{if $f(x) \ge 0$ for all $x \in \mathcal{H}$, then either $0 \in S_f$ or $a_* > 0$, where $(a_*, v_*)$ is a
globally optimal solution of the problem
$$
  \min_{(a, v) \in \mathbb{R} \times \mathcal{H}} \| (a, v) \|^2 \quad 
  \text{subject to} \quad (a, v) \in S_f;
$$
conversely, if $f$ is bounded below and either $0 \in S_f$ or $a_* > 0$, then $f(x) \ge 0$ for all $x \in \mathcal{H}$.
Moreover, in the case $a_* > 0$ one has $a_f > 0$, i.e. $\inf_{x \in \mathcal{H}} f(x) > 0$.
\label{Stat_Nonnegative}}
\end{enumerate}
\end{proposition}

\begin{proof}
\ref{Stat_BoundedBelow}. If $S_f \cap (\mathbb{R} \times \{ 0 \}) \ne \emptyset$, then there exists 
$a_0 \in \mathbb{R}$ such that $(a_0, 0) \in S_f$. By the definition of $S_f$ for all $x \in \mathcal{H}$ one has 
$f(x) \ge a_0$, i.e. $f$ is bounded below.

Suppose, now, that $f$ is bounded below. Denote $f_* = \inf_{x \in \mathcal{H}} f(x)$. Then for any $\varepsilon > 0$
there exists $x_{\varepsilon} \in \mathcal{H}$ such that $f(x_{\varepsilon}) \le f_* + \varepsilon$. Hence
$0 \in \partial_{\varepsilon} f(x_{\varepsilon})$, which with the use of Proposition~\ref{Thrm_SubdiffViaSupportSet}
implies that there exists $a \ge f(x_{\varepsilon}) - \varepsilon$ such that $(a, 0) \in S_f$, i.e. 
$S_f \cap (\mathbb{R} \times \{ 0 \}) \ne \emptyset$. 

\ref{Stat_InfValue}. As was just proved, for any $\varepsilon > 0$ there exists 
$a \ge f(x_{\varepsilon}) - \varepsilon \ge f_* - \varepsilon$ such that $(a, 0) \in S_f$. Therefore 
$a_f \ge f_*$. On the other hand, for any $(a, 0) \in S_f$ and $x \in \mathcal{H}$ one obviously has 
$f(x) \ge a$, which implies that $a_f \le f_*$. Thus, $a_f = f_*$.

\ref{Stat_AttainmentOfMin}. Let $f$ attain a global minimum at a point $x_* \in \mathcal{H}$. By definition
$f(x_*) = \sup_{(a, v) \in S_f} (a + \langle v, x_* \rangle) = f_*$ or, equivalently,
$$
  (a - f_*) + \langle v, x_* \rangle \le 0 \quad \forall (a, v) \in S_f,
$$
which implies that $(1, x_*) \in N_f$ (note that $(f_*, 0) \in S_f$ and $a_f = f_*$ by the second part of the theorem).

Suppose, now, that $N_f \ne \emptyset$, and there exists $(b, w) \in N_f$ with $b > 0$. By the definition of $N_f$ and
the second part of the theorem one has
$$
  b (a - f_*) + \langle w, v \rangle \le 0 \quad \forall (a, v) \in S_f.
$$
Dividing by $b$ and taking the supremum over all $(a, v) \in S_f$ one obtains 
$$
  f\left( \frac{1}{b} w \right) = 
  \sup_{(a, v) \in S_f} \left( a + \left\langle v, \frac{1}{b} w \right\rangle \right) \le f_*,
$$
which implies that $b^{-1} w$ is a global minimizer of $f$. Thus,
$\argmin_{x \in \mathcal{H}} f(x) = \{ b^{-1} w \in \mathcal{H} \mid (b, w) \in N_f \colon b > 0 \}$.

\ref{Stat_Nonnegative}. Let $f(x) \ge 0$ for all $x \in \mathcal{H}$. Arguing by reductio ad absurdum,
suppose that $0 \notin S_f$ and $a_* \le 0$. From the definition of $(a_*, v_*)$ and the necessary condition for a
minimum of a differentiable function on a convex set it follows that
\begin{equation} \label{Nonneg_SepThrm}
  a_* (a - a_*) + \langle v_*, v - v_* \rangle \ge 0 \quad \forall (a, v) \in S_f.
\end{equation}
If $a_* = 0$, then one gets that $\langle v, -v_* \rangle \le - \| v_* \|^2 < 0$ for all $(a, v) \in S_f$ (note that
$v_* \ne 0$, since otherwise $0 \in S_f$). Therefore for any $\alpha > 0$ and $x \in \dom f$ one has
$$
  f(x - \alpha v_*) = \sup_{(a, v) \in S_f} (a + \langle v, x \rangle + \alpha \langle v, -v_* \rangle) \le 
  f(x) - \alpha \| v_* \|^2.
$$
Consequently, $f(x - \alpha v_*) \to - \infty$ as $\alpha \to + \infty$, which is impossible.

If $a_* < 0$, then dividing \eqref{Nonneg_SepThrm} by $a_*$ and taking the supremum over all $(a, v) \in S_f$ one
obtains that
$$
  f\left( \frac{1}{a_*} v_* \right) = 
  \sup_{(a, v) \in S_f} \left( a + \left\langle v, \frac{1}{a_*} v_* \right\rangle \right) 
  \le a_* + \frac{1}{a_*} \| v_* \|^2 < 0,
$$
which contradicts the assumption that $f$ is nonnegative.

Let us prove the converse statement. If $0 \in S_f$, then, obviously, one has $f(x) \ge 0$ for all 
$x \in \mathcal{H}$. Therefore, let $0 \notin S_f$ and $a_* > 0$. Arguing by reductio ad absurdum, suppose
that $f_* = \inf_{x \in \mathcal{H}} f(x) < 0$ (note that $f_* > - \infty$ due to the assumption that $f$ is bounded
below). By the second part of the theorem one has $(f_*, 0) \in S_f$. Consequently, for any $\alpha \in [0, 1]$ one has
$\alpha (a_*, v_*) + (1 - \alpha) (f_*, 0) \in S_f$. Setting $\alpha = |f_*| / (|f_*| + a_*) \in (0, 1)$ one obtains
that $(0, \alpha v_*) \in S_f$, which is impossible due to the definition of $(a_*, v_*)$ and the obvious inequality 
$\| (0, \alpha v_*) \|^2 < \| (a_*, v_*) \|^2$. Thus, the function $f$ is nonnegative. It remains to note that $a_f > 0$
in the case when $a_* > 0$ by virtue of the facts that $a_f \ge 0$ due to the nonnegativity of the function $f$, and
$a_f \ne 0$, since otherwise $0 \in S_f$ and $a_* = 0$.	
\end{proof}

\begin{remark} \label{Remark_MainThrm_GlobSuppSet}
{(i)~Let us note that the assumption on the boundedness below of the function $f$ cannot be dropped from the last part
of the proposition above. Indeed, if $f(x) \equiv a + \langle v, x \rangle$ with $a > 0$ and $v \ne 0$, then defining 
$S_f = (a, v)$ one obtains that $a_* > 0$, but the function $f$ is not nonnegative.
}

\noindent{(ii)~From the proof of the last part of the proposition above it follows that if $0 \notin S_f$, but $a_* =
0$,
then $f$ is not bounded below. Consequently, if $f$ is bounded below, then $f$ is nonnegative iff $a_* \ge 0$.
Furthermore, note that if $a_* < 0$, then $f(\frac{1}{a_*} v_*) < 0$.
}
\end{remark}

Let us give a simple example illustrating the proposition above.

\begin{example}
Let $\mathcal{H} = \mathbb{R}$, and $S_f = \{ (a, v) \in \mathbb{R}^2 \mid (a + 1)^2 + (v - 1)^2 \le 1 \}$. Then
according to Proposition~\ref{Thrm_MinViaGlobalHypodiff} one has $f_* = \inf_{x \in \mathbb{R}} f(x) = -1$. Furthermore,
it is easy to check that $N_f = \{ (a, v) \in \mathbb{R}^2 \mid a = 0, \: v \le 0 \}$, which 
by Proposition~\ref{Thrm_MinViaGlobalHypodiff} implies that the function $f$ does not attain a global minimum. Let us
verify this directly. Indeed, for any $x \in \mathbb{R}$ one has
$$
  f(x) = \max_{(a, v) \in S_f} (a + v x) = \max\{ (a - 1) + (v + 1) x \mid a^2 + v^2 \le 1 \} 
  = \sqrt{1 + x^2} + x - 1.
$$
Thus, $f_* = -1$, and $f$ does not attain a global minimum.
\end{example}

Let us also obtain an extension of part~\ref{Stat_Nonnegative} of Proposition~\ref{Thrm_MinViaGlobalHypodiff} to the
case when the nonnegativity of the function $f$ is checked on a set defined by an inequality constraint.

\begin{proposition} \label{Theorem_MinOnConvSetViaHypodiff}
Let $g \colon \mathcal{H} \to \overline{\mathbb{R}}$ be a proper closed convex function, and let $S_g$ be any
affine support set of $g$. Suppose also that $\dom f \cap \dom g \ne \emptyset$. If $f(x) \ge 0$ for all $x$ satisfying
the inequality $g(x) \le 0$, then either $0 \in \cl\co\{ S_f, S_g \}$ or $a_* > 0$, where $(a_*, v_*)$ is a globally
optimal solution of the problem
$$
  \min_{(a, v) \in \mathbb{R} \times \mathcal{H}} \| (a, v) \|^2 \quad 
  \text{subject to} \quad (a, v) \in \cl\co\{ S_f, S_g \}.
$$
Conversely, if $f$ is bounded below and continuous on the set $\{ x \mid g(x) \le 0 \}$, $0 \notin S_g$, and either 
$0 \in \cl\co\{ S_f, S_g \}$ or $a_* > 0$, then $f(x) \ge 0$ for all $x$ satisfying the inequality $g(x) \le 0$.
Moreover, in the case $a_* > 0$ there exists $\gamma > 0$ such that $f(x) \ge \gamma$ for all $x$ satisfying the
inequality $g(x) < \gamma$.
\end{proposition}

\begin{proof}
Let $f$ be nonnegative on the set $\{ x \mid g(x) \le 0 \}$. Arguing by reductio ad absurdum, suppose that 
$0 \notin \cl\co\{ S_f, S_g \}$ and $a_* \le 0$. By the necessary condition for a minimum of a convex function on
a convex set one obtains that
\begin{equation} \label{SepThrm_NonnegaInequalConstr}
  a_*(a - a_*) + \langle v_*, v - v_* \rangle \ge 0 \quad \forall (a, v) \in \cl\co\{ S_f, S_g \}.
\end{equation}
If $a_* < 0$, then dividing this inequality by $a_*$, and at first taking the supremum over all $(a, v) \in S_f$, and
at second taking the supremum over all $(a, v) \in S_g$ one obtains that
$$
  f\left( \frac{1}{a_*} v_* \right) \le a_* + \frac{1}{a_*} \| v_* \| < 0, \quad 
  g\left( \frac{1}{a_*} v_* \right) \le a_* + \frac{1}{a_*} \| v_* \| < 0,
$$
which is impossible. On the other hand, if $a_* = 0$, then from \eqref{SepThrm_NonnegaInequalConstr} it follows that
$\langle v, - v_* \rangle \le - \| v_* \|^2 < 0$ for all $(a, v) \in S_f \cup S_g$ (note that $v_* \ne 0$, since
otherwise $0 \in \cl\co\{ S_f, S_g \}$). Hence for any $x \in \dom f \cap \dom g$ and for all $\alpha > 0$ one has
$$
  f(x - \alpha v_*) \le f(x) - \alpha \| v_* \|^2, \quad 
  g(x - \alpha v_*) \le g(x) - \alpha \| v_* \|^2.
$$
Consequently, $f(x - \alpha v_*) < 0$ and $g(x - \alpha v_*) < 0$ for any sufficiently large $\alpha > 0$, which is
impossible.

Let us prove the converse statement. Define $h(\cdot) = \sup\{ f(\cdot), g(\cdot) \}$. It is easily seen that 
$\cl\co\{ S_f, S_g \}$ is an affine support set of the function $h$. Our aim is to verify that $f(x) \ge 0$ on
the set $\{ x \mid g(x) \le 0 \}$ iff $h(x) \ge 0$ for all $x \in \mathcal{H}$, provided $0 \notin S_g$. Then applying
the last part of Proposition~\ref{Thrm_MinViaGlobalHypodiff} to the function $h$ one obtains the desired result.

Clearly, if $f(x) \ge 0$ for all $x$ satisfying the inequality $g(x) \le 0$, then $h(\cdot) \ge 0$. Let us prove 
the converse statement. If the set $\{ x \mid g(x) \le 0 \}$ is empty, then the statement holds vacuously.
Therefore, suppose that this set is not empty. Note that if $\inf_{x \in \mathcal{H}} g(x) = 0$, then $0 \in S_g$ by
Proposition~\ref{Thrm_MinViaGlobalHypodiff}, which contradicts our assumption. Thus, there exists $x_0$ such that 
$g(x_0) < 0$, i.e. Slater's condition holds true.

Suppose that the function $h$ is nonnegative. Then $f(x) \ge 0$ for any $x$ satisfying the strict inequality $g(x) < 0$.
From the convexity of the function $g$ it follows that $\{ x \mid g(x) \le 0 \} = \cl\{ x \mid g(x) < 0 \}$, since for
any point $x$ such that $g(x) = 0$ one has $g(\alpha x + (1 - \alpha) x_0) < 0$ for all $\alpha \in [0, 1)$.
Consequently, applying the fact that $f$ is continuous on $\{ x \mid g(x) \le 0 \}$ one obtains that $f(x) \ge 0$ for
all $x$ satisfying the inequality $g(x) \le 0$, and the proof is complete.	
\end{proof}

\begin{remark}
The assumption that $f$ is bounded below on $\{ x \mid g(x) \le 0 \}$ is necessary for the validity of the converse
statement of the previous proposition. Indeed, if $f(x) = g(x) = a + \langle v, x \rangle$ for some $a > 0$ and 
$v \ne 0$, then $a_* = a > 0$, but $f(x) < 0$ for any $x$ such that $g(x) < 0$. The assumption $0 \notin S_g$ is also
necessary for the validity of the converse statement of the proposition, since if $0 \in S_g$, then 
$0 \in \cl\co\{ S_f, S_g \}$ regardless of the behaviour of the function $f$. Furthermore, note that the assumption 
$0 \notin S_g$ is, in fact, equivalent to  Slater's condition, provided the set $\{ x \mid g(x) \le 0 \}$ is not empty.
\end{remark}

With the use of Proposition~\ref{Thrm_MinViaGlobalHypodiff} we can point out a direct connection between affine support
sets of $f$ and the Frenchel conjugate function $f^*$.

\begin{proposition} \label{Theorem_AffSupport_vs_Conjugate}
Let $S_f$ be any affine support set of $f$. Then
\begin{equation} \label{ConjugateFunc_via_AffineSupport}
  \sup\{ a \mid (a, v) \in S_f \} = - f^*(v) \quad \forall v \in \mathcal{H}.
\end{equation}
In particular, any affine support set of the function $f$ is contained in the set 
$\{ (a, v) \in \mathbb{R} \times \mathcal{H} \mid a \le - f^*(v) \}$. Furthermore, the set
\begin{equation} \label{SmallestAffineSupportSet}
\begin{split}
  S_f &= \cl\co\{ (-f^*(v), v) \in \mathbb{R} \times \mathcal{H} \mid v \in \dom f^* \} \\
  &= \cl\co\{ (f(y) - \langle v, y \rangle, v) \in \mathbb{R} \times \mathcal{H} \mid 
  y \in \dom \partial f, \: v \in \partial f(y) \}
\end{split}
\end{equation}
is the smallest (by inclusion) affine support set of the function $f$.
\end{proposition}

\begin{proof}
Fix $v \in \mathcal{H}$, and consider the function $g(x) = f(x) - \langle v, x \rangle$. Note that this function is
bounded below iff $v \in \dom f^*$. On the other hand, from the fact that the set $S_f - (0, v)$ is an affine support
set of this function and the first part of Proposition~\ref{Thrm_MinViaGlobalHypodiff} it follows that $g$ is bounded
below iff there exists $a \in \mathbb{R}$ such that
$(a, v) \in S_f$. Furthermore, if $v \in \dom f^*$, then applying the second part of
Proposition~\ref{Thrm_MinViaGlobalHypodiff} one obtains that
\begin{align*}
  - f^*(v) = \inf_{x \in \mathcal{H}} (f(x) - \langle v, x \rangle) &= \sup\{ a \mid (a, 0) \in S_f - (0, v) \} \\
  &= \sup\{ a \mid (a, v) \in S_f \},
\end{align*}
i.e. \eqref{ConjugateFunc_via_AffineSupport} holds true, and 
$S_f \subseteq \{ (a, v) \in \mathbb{R} \times \mathcal{H} \mid a \le - f^*(v) \}$. Hence and from the fact that
\begin{equation} \label{ConvexFuncViaBiconjugate}
  f(x) = f^{**}(x) = \sup_{v \in \dom f^*} \big( \langle v, x \rangle - f^*(v) \big) 
  \quad \forall x \in \mathcal{H}
\end{equation}
it follows that set \eqref{SmallestAffineSupportSet} is the smallest affine support set of the function $f$. It
remains to note that the second equality in \eqref{SmallestAffineSupportSet} follows directly from the facts that 
(i) one can take the supremum in \eqref{ConvexFuncViaBiconjugate} over all $v \in \dom \partial f^*$ (since if 
$v \in \dom f^* \setminus \dom \partial f^*$, then for any $x \in \mathcal{H}$ by definition there exists 
$w \in \dom f^*$ such that  $\langle w, x \rangle - f^*(w) > \langle v, x \rangle - f^*(v)$), and 
(ii) $v \in \dom \partial f^*$ iff $v \in \partial f(y)$ for some $y \in \dom \partial f$ iff 
$f^*(v) = \langle v, y \rangle - f(y)$ by \cite[Corollary~X.1.4.4]{LemarechalII}. 
\end{proof}

\begin{remark}
The proposition above demonstrates that there is a direct connection between affine support sets and conjugate
functions.
Note, in particular, that the function $g(v)$ defined in the proof of Proposition~\ref{Thrm_SubdiffViaSupportSet} is, in
fact, the negative of the conjugate function $f^*$. Furthermore, Proposition~\ref{Thrm_SubdiffViaSupportSet} itself is a
reformulation of the standard characterization of $\varepsilon$-subgradients via the conjugate function (see,
e.g. \cite[Proposition~XI.1.2.1]{LemarechalII}) in terms of affine support sets. In the light of
Proposition~\ref{Theorem_AffSupport_vs_Conjugate} we can also give a simple interpretation of
Proposition~\ref{Thrm_MinViaGlobalHypodiff}. The first two statements of this proposition is nothing but the obvious
equality
$\inf_{x \in \mathcal{H}} f(x) = - f^*(0)$. The third one is a combination of the equality 
$\argmin_{x \in \mathcal{H}} f(x) = \partial f^*(0)$ and the well-known geometric interpretation of the
subdifferential in terms of the normal cone to the epigraph of a convex function 
(see, e.g. \cite[Proposition~VI.1.3.1]{Lemarechal}). However, to the best of author's knowledge, the last statement of
Proposition~\ref{Thrm_MinViaGlobalHypodiff} is completely new. Furthermore, the last statement of this propositon is a
basis of new global optimality conditions for DC optimization problems derived in the next section. 
\end{remark}

Let us present some simple calculus rules for affine support sets. Their proofs are straightforward and therefore
are omitted.

\begin{proposition}[linear combination]
Let $f_i \colon \mathcal{H} \to \overline{\mathbb{R}}$, $i \in I = \{ 1, \ldots, l \}$ be proper closed convex
functions, and let $S_{f_i}$ be any affine support set of $f_i$, $i \in I$. Then for any $\lambda_i \ge 0$, $i \in I$,
the set 
$S_f = \cl( \sum_{i \in I} \lambda_i S_{f_i} )$ is an affine support set of 
the function $f = \sum_{i \in I} \lambda_i f_i$. 
\end{proposition}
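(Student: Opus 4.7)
The plan is to unpack the definition of affine support set, push the sup through the sum of scalar multiples, and then show that closing the sum does not alter the sup representation while producing a closed convex set.

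First I would verify that for each $i$ and each $\lambda_i \ge 0$, the scaled set $\lambda_i S_{f_i}$ represents $\lambda_i f_i$ in the sense that
\[
  \lambda_i f_i(x) = \sup_{(b, w) \in \lambda_i S_{f_i}} \big( b + \langle w, x \rangle \big)
  \quad \forall x \in \mathcal{H}.
\]
For $\lambda_i > 0$ this is immediate from the substitution $(b, w) = \lambda_i (a, v)$. For $\lambda_i = 0$, both sides equal $0$ (with the usual convention $0 \cdot (+\infty) = 0$), since $\lambda_i S_{f_i} = \{(0,0)\}$.

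Next I would take the sum over $i \in I$. Because the suprema defining each $\lambda_i f_i(x)$ range over independent variables $(b_i, w_i) \in \lambda_i S_{f_i}$, the sum of suprema equals the supremum of sums:
\[
  f(x) = \sum_{i \in I} \lambda_i f_i(x)
  = \sup_{\substack{(b_i, w_i) \in \lambda_i S_{f_i} \\ i \in I}}
    \sum_{i \in I} \big( b_i + \langle w_i, x \rangle \big)
  = \sup_{(a, v) \in T} \big( a + \langle v, x \rangle \big),
\]
where $T = \sum_{i \in I} \lambda_i S_{f_i}$ is the Minkowski sum. This already exhibits $T$ as a (pre-)affine support set, but $T$ may fail to be closed.

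Finally I would pass to the closure. Since the map $(a, v) \mapsto a + \langle v, x \rangle$ is continuous on $\mathbb{R} \times \mathcal{H}$ for every fixed $x$, the supremum over $T$ coincides with the supremum over $\cl T = S_f$. The set $S_f$ is closed by construction, and convex because a Minkowski sum of convex sets is convex and the closure of a convex set is convex. Hence $S_f$ satisfies the defining property of an affine support set of $f$.

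I do not foresee a genuine obstacle; the only minor care needed is the bookkeeping for $\lambda_i = 0$ (and, tacitly, the understanding that the proposition is vacuous when $\bigcap_i \dom f_i = \emptyset$ makes $f \equiv +\infty$ improper, in which case $S_f$ can be taken to be the whole space by the usual convention). The essential content is the interchange of sup and sum, which is automatic thanks to the independence of the $(b_i, w_i)$, and the harmless passage to the closure.
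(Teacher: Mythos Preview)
Your argument is correct and is exactly the kind of direct verification the paper has in mind; the paper itself omits the proof entirely, remarking only that it is straightforward, so there is no alternative approach to compare against. The one point worth noting is that your representation $f(x) = \sup_{(a,v)\in S_f}(a + \langle v,x\rangle)$ automatically forces $f$ to be closed and convex, so the only genuine hypothesis to keep track of is properness, which you already flag.
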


\begin{proposition}[affine transformation]
Let $g \colon \mathcal{H} \to \overline{\mathbb{R}}$ be a proper closed convex function, and $S_g$ be any affine support
set of $g$. Suppose also that $X$ is a Hilber space, $T \colon X \to \mathcal{H}$ is a bounded linear operator, and 
$f(x) = g(T x + b)$ for some $b \in \mathcal{H}$. Then the set 
\begin{equation} \label{ExactSupp_AffTrasform}
  S_f = \cl\big\{ (a + \langle v, b \rangle, T^* v) \in \mathbb{R} \times X \bigm| (a, v) \in S_g \big\}
\end{equation}
is an affine support set of $f$. Moreover, the closure operator in \eqref{ExactSupp_AffTrasform} can be dropped, if
$S_g$ is bounded or $T$ is invertible.
\end{proposition}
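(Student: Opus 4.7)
The plan is to reduce the statement to the defining identity of $g$ via its affine support set, combined with the adjoint relation. Since $g(y) = \sup_{(a,v) \in S_g}(a + \langle v, y \rangle)$ for every $y \in \mathcal{H}$, substituting $y = Tx + b$ and using $\langle v, Tx \rangle = \langle T^*v, x \rangle$ yields
$$f(x) = g(Tx + b) = \sup_{(a,v) \in S_g}\bigl( (a + \langle v, b \rangle) + \langle T^*v, x \rangle \bigr) \quad \forall x \in X.$$
Introducing the set $S'_f := \{ (a + \langle v, b\rangle, T^*v) \mid (a,v) \in S_g \}$, this reads $f(x) = \sup_{(a',v') \in S'_f} (a' + \langle v', x\rangle)$, which is already almost the desired representation modulo closure.

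Next I would verify that $S_f := \cl(S'_f)$ meets the three requirements of an affine support set. Closedness is built in; convexity follows because $S'_f$ is the image of the convex set $S_g$ under the bounded linear map $\Phi \colon (a,v) \mapsto (a + \langle v, b\rangle, T^*v)$, and closure preserves convexity. To lift the supremum identity from $S'_f$ to $\cl(S'_f)$, I fix $x \in X$ and use norm-continuity of $(a',v') \mapsto a' + \langle v', x\rangle$: for every $(a^*, v^*) \in \cl(S'_f)$ there is a sequence in $S'_f$ along which this affine function converges to $a^* + \langle v^*, x\rangle$, so $\sup_{\cl(S'_f)}(a' + \langle v', x \rangle) = \sup_{S'_f}(a' + \langle v', x \rangle) = f(x)$.

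For the final claim I would show that $S'_f$ is already closed under either hypothesis. If $T$ is invertible, then $T^*$ is boundedly invertible as well, so $\Phi$ is a topological isomorphism and maps the closed set $S_g$ onto the closed set $S'_f$. If instead $S_g$ is bounded, then, being closed and bounded in the reflexive space $\mathbb{R} \times \mathcal{H}$, it is weakly compact; weak-to-weak continuity of the bounded linear map $\Phi$ makes $\Phi(S_g) = S'_f$ weakly compact, hence weakly closed, and Mazur's theorem converts weak closedness of the convex set $S'_f$ into norm closedness. This infinite-dimensional step, where closedness of $\Phi(S_g)$ is not automatic and requires weak compactness together with Mazur's theorem, is the only subtle point of the proof; everything else reduces to substitution, the adjoint identity, and the continuity of affine functionals.
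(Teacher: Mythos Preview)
Your proof is correct and follows the natural route; the paper itself omits the argument as ``straightforward,'' and your substitution-plus-adjoint computation together with the closure/convexity verification is exactly what is intended. One small simplification in the bounded case: once $\Phi(S_g)$ is weakly compact it is already weakly closed and hence norm closed, so the appeal to Mazur's theorem is not needed.
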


\begin{proposition}[supremum]
Let $Y$ be a nonempty set, and a function $f \colon \mathcal{H} \times Y \to \overline{\mathbb{R}}$ be
such that for any $y \in Y$ the function $f(\cdot, y)$ is proper, closed, and convex. Suppose also that $S(y)$ is an
affine support set of the function $f(\cdot, y)$, and the function $g(\cdot) = \sup_{y \in Y} f(\cdot, y)$ is proper.
Then $S_g = \cl\co\{ S(y) \mid y \in Y \}$ is an affine support set of the function $g$.
\end{proposition}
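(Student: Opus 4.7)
The plan is to verify the two defining properties of an affine support set for $S_f := \cl\co\{ S(y) \mid y \in Y \}$: first, that $S_f$ is a closed convex subset of $\mathbb{R} \times \mathcal{H}$ (immediate from the construction, once one notes $S_f$ is nonempty because each $S(y)$ is nonempty as the affine support set of a proper function, and $Y$ is nonempty since $f$ is proper), and second, that
\[
  f(x) = \sup_{(a,v) \in S_f} \bigl( a + \langle v, x \rangle \bigr)
  \quad \forall x \in \mathcal{H}.
\]

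For the inequality $\ge$, I would fix $x \in \mathcal{H}$, observe that the map $\ell_x \colon (a, v) \mapsto a + \langle v, x \rangle$ is continuous and affine on $\mathbb{R} \times \mathcal{H}$, and use this to push suprema through convex hull and closure. Concretely, for each $y \in Y$ and each $(a, v) \in S(y)$ the defining property of $S(y)$ gives $a + \langle v, x \rangle \le f(\cdot, y)(x) \le f(x)$, so $\sup_{(a,v) \in \bigcup_y S(y)} \ell_x(a,v) \le f(x)$. Since $\ell_x$ is affine, its supremum over a set agrees with the supremum over the convex hull, and continuity of $\ell_x$ extends this to the closure, yielding $\sup_{S_f} \ell_x \le f(x)$.

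For the reverse inequality $\le$, I would simply note that $S(y) \subseteq S_f$ for every $y \in Y$, so
\[
  f(\cdot, y)(x) = \sup_{(a,v) \in S(y)} \ell_x(a,v) \le \sup_{(a,v) \in S_f} \ell_x(a,v),
\]
and then take the supremum over $y \in Y$ to obtain $f(x) \le \sup_{(a,v) \in S_f} \ell_x(a,v)$.

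There is no real obstacle here; the only point that requires a moment of thought is justifying that the closure operation in the definition of $S_f$ does not strictly enlarge the supremum of $\ell_x$, which is handled by the continuity of $\ell_x$ in the norm topology of $\mathbb{R} \times \mathcal{H}$ (in particular, $v \mapsto \langle v, x \rangle$ is continuous for every fixed $x$). The properness assumption on $f$ is used only to guarantee $S_f \neq \emptyset$ and to rule out the degenerate case $f \equiv -\infty$ that would arise from $Y = \emptyset$.
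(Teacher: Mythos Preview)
Your proof is correct and is precisely the straightforward verification the paper has in mind; indeed, the paper omits the proof entirely, stating only that it is ``straightforward''. The only minor imprecision is your remark about the role of properness of $f$: since $Y$ is already assumed nonempty and each $S(y)$ is nonempty, $S_f \ne \emptyset$ is automatic; properness of $f$ is needed rather to ensure that $f$ falls within the class of functions for which affine support sets are defined (ruling out $f \equiv +\infty$), but this does not affect the validity of your argument.
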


In the end of this section, let us give several simple examples demonstrating how one can compute affine support sets
of convex functions with the use of Proposition~\ref{Theorem_AffSupport_vs_Conjugate} and some other well-known results.

\begin{example}
If $f$ is a proper closed positively homogeneous convex function, then the set 
$S_f = \{ 0 \} \times \partial f(0)$ is an affine support set of $f$ (see, e.g.~\cite[Theorem~V.3.1.1]{Lemarechal}). 
\end{example}

\begin{example} \label{Example_SupposrtSet_PolyhedralFunc}
If $\mathcal{H} = \mathbb{R}^d$, and $f$ is a finite polyhedral convex function, then 
$f(x) = \max_{1 \le i \le n} (a_i + \langle v_i, x \rangle)$ for some $n \in \mathbb{N}$ and 
$(a_i, v_i) \in \mathbb{R}^{d + 1}$ (see~\cite[Sect.~19]{Rockafellar}). Consequently, 
the set $S_f = \co\{ (a_i, v_i) \mid i \in I \}$ is an affine support set of $f$. Therefore, a finite convex function
$f$ is polyhedral iff there exists an affine support set of this function that is a convex polytope.
\end{example}

\begin{example} \label{Example_SupportSetComputation}
If $f$ is G\^{a}teaux differentiable on its effective domain, then
$$
  S_f = \cl\co\Big\{ \big( f(x) - \langle \nabla f(x), x \rangle, \nabla f(x) \big) \in \mathbb{R} \times \mathcal{H}
  \Bigm| x \in \dom f \Big\}
$$
is an affine support set of $f$. Here $\nabla f(x)$ is the gradient of $f$ at $x$ In particular, 
if $f(x) = 0.5 \langle x, A x \rangle + \langle b, x \rangle$, where the linear operator 
$A \colon \mathcal{H} \to \mathcal{H}$ is positive semidefinite, then
$S_f = \cl\co\{ (- 0.5 \langle x, A x \rangle, Ax + b) \mid x \in \mathcal{H} \}$ is an affine support set of $f$. Note
that in this case it is easier to describe the affine support set with the use of the gradient rather than the
conjugate function (cf.~\eqref{SmallestAffineSupportSet}), since the conjugate function is defined via the pseudoinverse
operator of $A$.
\end{example}

\section{Global codifferential calculus and optimality conditions}
\label{Sect_GlobOptCond}

In this section we apply the main results on affine support sets of convex functions obtained above to DC
optimization problems. In particular, with the use of Proposition~\ref{Thrm_MinViaGlobalHypodiff} we obtain new
necessary and sufficient conditions for global optimality in DC optimization. Hereinafter we consider only
finite-valued DC functions $f \colon \mathcal{H} \to \mathbb{R}$ defined on the entire space $\mathcal{H}$.

Let $f$ be a DC function, i.e. let $f = g - h$, 
where $g, h\colon \mathcal{H} \to \mathbb{R}$ are closed convex functions. Suppose also that
$S_{g}$ and $S_{h}$ are any affine support sets of the functions $g$ and
$h$ respectively. Introduce the set-valued mappings
\begin{equation} \label{GlobalHypodiffDefViaSupportSet}
\begin{split}
  \underline{d} f(x) &= \big\{ (a - g(x) + \langle v, x \rangle, v) \in \mathbb{R} \times \mathcal{H} 
  | (a, v) \in S_{g} \big\}, \\
  \overline{d} f(x) &= \big\{ (- b + h(x) - \langle w, x \rangle, - w) \in \mathbb{R} \times \mathcal{H}
  | (b, w) \in S_{h} \big\}.
\end{split}
\end{equation}
Then for any $x, \Delta x \in \mathcal{H}$ the following equality holds true:
\begin{equation} \label{GlobalCodiffDef}
  f(x + \Delta x) - f(x) = \sup_{(a, v) \in \underline{d} f(x)} (a + \langle v, \Delta x \rangle)
  + \inf_{(b, w) \in \overline{d} f(x)} (b + \langle w, \Delta x \rangle)
\end{equation}
(in actuality, the supremum and the infimum are attained by Remark~\ref{Remark_AffSupp_SupAttained}). Indeed, by
definition one has
\begin{align} \notag
  g(x + \Delta x) - g(x) 
  &= \sup_{(a, v) \in S_{g}} (a + \langle v, x + \Delta x \rangle) - g(x) \\
  &= \sup_{(a, v) \in S_{g}} (a - g(x) + \langle v, x \rangle + \langle v, \Delta x \rangle) \notag \\
  &= \sup_{(a, v) \in \underline{d} f(x)} ( a + \langle v, \Delta x \rangle). \label{DC_ConvexPartViaGlobalCodiff}
\end{align}
Subtracting from this equality the same one for the function $h(x)$ one obtains that \eqref{GlobalCodiffDef}
is valid.
Furthermore, for any $x \in \mathcal{H}$ one has
$$
  \sup_{(a, v) \in \underline{d} f(x)} a = \sup_{(a, v) \in S_{g}} (a + \langle v, x \rangle) - g(x) = 0,
$$
and, similarly, $\inf_{(b, w) \in \overline{d} f(x)} b = 0$. Finally, observe that the sets $\underline{d} f(x)$ and
$\overline{d} f(x)$ are convex and closed due to the fact that the map
$(a, v) \mapsto (a - g(x) + \langle v, x \rangle, v)$ is a homeomorphism of $\mathbb{R} \times \mathcal{H}$. Thus,
the pair $[\underline{d} f(x), \overline{d} f(x)]$ has similar properties to codifferential of $f$ at $x$
\cite{DemRub_book,Demyanov_InCollection_1988,Demyanov1988,Demyanov1989}. Therefore, it is natural to call the pair
$D f = [\underline{d} f, \overline{d} f]$ \textit{a global codifferential mapping} (or simply \textit{a global
codifferential}) of the function $f$ associated with the DC decomposition $f = g - h$. The
multifunction $\underline{d} f$ is called \textit{a global hypodifferential} of $f$, while the multifunction
$\overline{d} f$ is called \textit{a global hyperdifferential} of $f$. Note that global codifferential mappings are
obviously not unique, since there exist infinitely many DC decompositions of a DC function.

Let us point out some simple calculus rules for global codifferentials. Their proofs are straightforward, and we omit
them for the sake of shortness (see~\cite[Proposition~4.4]{Dolgopolik_CodiffDescent_Global} for some details).

\begin{proposition} \label{Prp_GlobalCodiffCalc}
Let $f_i$, $i \in I = \{ 1, \ldots k \}$, be DC functions, and let $D f_i$ be the
global codifferential mapping of $f_i$ associated with a DC decomposition $f_i = g_{i} - h_{i}$.
The following statements hold true:
\begin{enumerate}
\item{if $f = f_1 + c$ for some $c \in \mathbb{R}$, then $D f = D f_1$;
}

\item{if $f = \sum_{i = 1}^k f_i$, then 
$D f = [ \cl(\sum_{i = 1}^k \underline{d} f_i), \cl(\sum_{i = 1}^k \overline{d} f_i) ]$
is a global codifferential mapping of the function $f$ associated with the DC decomposition 
$f = \sum_{i = 1}^k g_{i} - \sum_{i = 1}^k h_{i}$;
}

\item{if $f = \lambda f_1$, then $D f = [ \lambda \underline{d} f_1, \lambda \overline{d} f_1]$ is a global
codifferential mapping of $f$ associated with the DC decomposition 
$f = \lambda g_{1} - \lambda h_{1}$ in the case
$\lambda \ge 0$, and $D f = [ \lambda \overline{d} f_1, \lambda \underline{d} f_1]$ is a global codifferential 
mapping of $f$ associated with the DC decomposition $f = |\lambda| h_{1} - |\lambda| g_{1}$ in
the case $\lambda < 0$;
}

\item{if $f = \max_{i \in I} f_i$, then 
$$
  D f(\cdot) = \bigg[ \cl\co\bigg\{ (f_i(\cdot) - f(\cdot), 0) + \underline{d} f_i(\cdot)
  - \sum_{j \ne i} \overline{d} f_j(\cdot) \biggm| i \in I \bigg\}, 
  \cl\Big( \sum_{i = 1}^k \overline{d} f_i(\cdot) \Big)\bigg]
$$
is a global codifferential mapping of $f$ associated with the DC decomposition 
$f = \max_{i \in I}\{ g_{i} + \sum_{j \ne i} h_{j} \} - \sum_{i = 1}^k h_{i}$;
}

\item{if $f = \min_{i \in I} f_i$, then 
$$
  D f(\cdot) = \bigg[ \cl\Big( \sum_{i = 1}^k \underline{d} f_i(\cdot) \Big), 
  \cl\co\bigg\{ (f_i(\cdot) - f(\cdot), 0) + \overline{d} f_i(\cdot) 
  - \sum_{j \ne i} \underline{d} f_j(\cdot) \biggm| i \in I  \bigg\} \bigg]
$$
is a global codifferential mapping of $f$ associated with the DC decomposition 
$f = \sum_{i = 1}^k g_{i} - \max_{i \in I}\{ h_{i} + \sum_{j \ne i} g_{j} \}$.
}
\end{enumerate}
\end{proposition}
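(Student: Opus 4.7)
The plan is to exploit definition~\eqref{GlobalHypodiffDefViaSupportSet}, which expresses the global codifferential entirely in terms of affine support sets of the two convex components. Each claim then reduces to (i) identifying the stated DC decomposition, (ii) invoking the appropriate calculus rule for affine support sets (linear combination, affine transformation, or supremum, from the preceding three propositions), and (iii) pushing the resulting set through the affine homeomorphism $T_x \colon (a,v) \mapsto (a - \varphi(x) + \langle v, x \rangle, v)$ that converts an affine support set of a convex function $\varphi$ into its contribution to $\underline{d}$ (and similarly, up to a sign flip, into its contribution to $\overline{d}$). Since $T_x$ is a homeomorphism and its ``linear part'' in $(a,v)$ is affine, it commutes with $\cl$ and $\co$, so no topological subtleties will arise in the bookkeeping.

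For parts~1--3 the computation is routine. The shift $f = f_1 + c$ is handled by taking $S_{f_{11}} + (c,0)$ as an affine support set of $f_{11} + c$ and observing that the extra $c$ cancels against the $c$ appearing inside $f_{11}(x) + c$ in $T_x$. For the sum, the linear combination rule applied to $\sum_i f_{i1}$ and $\sum_i f_{i2}$, followed by $T_x$, gives exactly $\cl(\sum_i \underline{d} f_i)$ and $\cl(\sum_i \overline{d} f_i)$, the closure being required only because a Minkowski sum of closed sets need not be closed. The scalar case $\lambda \ge 0$ is immediate from $S_{\lambda f_{11}} = \lambda S_{f_{11}}$; for $\lambda < 0$ I would rewrite $\lambda f_1 = |\lambda| f_{12} - |\lambda| f_{11}$ and then verify by direct computation that
\[
\lambda \overline{d} f_1(x) = \bigl\{ (|\lambda|(b - f_{12}(x) + \langle w, x \rangle), |\lambda| w) \bigm| (b,w) \in S_{f_{12}} \bigr\},
\]
which is exactly the hypodifferential produced from $|\lambda| S_{f_{12}}$ via $T_x$; the hyperdifferential matches symmetrically.

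The principal obstacle, and the main content of the proof, lies in parts~4 and 5, where the supremum and linear combination rules must be combined and several constant terms have to be reconciled. For the max I would first check the DC identity
\[
  \max_{i \in I} f_i = \max_{i \in I}\Bigl( f_{i1} + \sum_{j \ne i} f_{j2} \Bigr) - \sum_{i=1}^{k} f_{i2},
\]
from which $\overline{d} f = \cl(\sum_i \overline{d} f_i)$ follows immediately from part~2. For the hypodifferential, the supremum and linear combination rules produce $\cl\co\{ \cl(S_{f_{i1}} + \sum_{j \ne i} S_{f_{j2}}) \mid i \in I \}$ as an affine support set of the maximum. Applying $T_x$ to a generic element $(a_{i1} + \sum_{j \ne i} b_{j2}, \, v_{i1} + \sum_{j \ne i} w_{j2})$ and using the identity $f_{i1}(x) - f_i(x) = f_{i2}(x)$ to re-express the constant term, one recognises the image as a member of $(f_i(x) - f(x), 0) + \underline{d} f_i(x) - \sum_{j \ne i} \overline{d} f_j(x)$; taking $\cl\co$ on both sides closes the argument.

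Part~5 is handled symmetrically via the dual decomposition $\min_i f_i = \sum_i f_{i1} - \max_i\bigl( f_{i2} + \sum_{j \ne i} f_{j1} \bigr)$, with the roles of $\underline{d}$ and $\overline{d}$ interchanged. The only point requiring care is the sign bookkeeping induced by the outer minus sign, whose behaviour is already encoded in part~3; once that is accounted for, the computation is a direct transcription of the max case.
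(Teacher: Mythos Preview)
Your proposal is correct. The paper itself omits the proof entirely as ``straightforward,'' so there is no detailed argument to compare against; your affine-support-set route via the homeomorphism $T_x$ is precisely the approach the paper labels (in the remark following the proposition) as the ``direct, but slightly more complicated way.'' The remark also sketches an alternative derivation of parts~4 and~5 by directly expanding the increment $f(x+\Delta x)-f(x)$ using the codifferentials of the $f_i$ and then adding and subtracting the $\overline{d} f_j$ terms, which sidesteps the constant-term bookkeeping you flag as the main obstacle. Both routes are fine; your approach has the virtue of being uniform across all five parts, while the increment computation is quicker for the max/min but less systematic.
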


\begin{remark}
Let us explain the presence of the terms $(f_i(\cdot) - f(\cdot), 0)$ in the expressions for global codifferentials
of the functions $f = \max_{i \in I} f_i$ and $f = \min_{i \in I} f_i$ in the proposition above. The easiest way to see
this is by computing the increment of the function $f$. Namely, let $k = 2$ and $f = \max\{ f_1, f_2 \}$. Then 
for any $x, \Delta x \in \mathcal{H}$ one has
\begin{multline*}
  f(x + \Delta x) - f(x) = \max\{ f_1(x + \Delta x) - f(x), f_2(x + \Delta x) - f(x) \} \\
  = \max\Big\{ f_1(x) - f(x) + \sup_{(a, v) \in \underline{d} f_1(x)} (a + \langle v, \Delta x \rangle)
  + \inf_{(b, w) \in \overline{d} f_1(x)} (b + \langle w, \Delta x \rangle), \\
  f_2(x) - f(x) + \sup_{(a, v) \in \underline{d} f_2(x)} (a + \langle v, \Delta x \rangle)
  + \inf_{(b, w) \in \overline{d} f_2(x)} (b + \langle w, \Delta x \rangle) \Big\}.
\end{multline*}
Adding and subtracting the terms corresponding to $\overline{d} f_1(x)$ and $\overline{d} f_2(x)$ one obtains
\begin{multline*}
  f(x + \Delta x) - f(x) \\
  = \max\Big\{ f_1(x) - f(x) + \sup_{(a, v) \in \underline{d} f_1(x)} (a + \langle v, \Delta x \rangle) 
  - \inf_{(b, w) \in \overline{d} f_2(x)} (b + \langle w, \Delta x \rangle), \\
  f_2(x) - f(x) + \sup_{(a, v) \in \underline{d} f_2(x)} (a + \langle v, \Delta x \rangle) 
  - \inf_{(b, w) \in \overline{d} f_1(x)} (b + \langle w, \Delta x \rangle) \Big\} \\
  + \inf_{(b, w) \in \overline{d} f_1(x)} (b + \langle w, \Delta x \rangle)
  + \inf_{(b, w) \in \overline{d} f_2(x)} (b + \langle w, \Delta x \rangle).
\end{multline*}
which implies the required result. The interested reader can also verify this fact in a direct, but slightly more
complicated way. Namely, define 
$$
  g = \max\{ g_1 + h_2, g_2 + h_1 \}, \quad
  S_{g} = \cl\co\{ S_{g_1} + S_{h_2}, S_{g_2} + S_{h_1} \},
$$
and compute $\underline{d} f(x)$ with the use of 
\eqref{GlobalHypodiffDefViaSupportSet} 
(cf.~\cite[Proposition~4.4, part~(5)]{Dolgopolik_CodiffDescent_Global}).
\end{remark}

Now we can turn to the study of global optimality conditions for DC optimization problems. At first, we obtain
necessary and sufficient global optimality conditions for the unconstrained problem
$$
  \min_{x \in \mathcal{H}} f(x)	\eqno{(\mathcal{P}_0)}
$$
in terms of a global codifferential of the function $f$.

\begin{theorem} \label{Thrm_GlobOptCond}
Let $f$ be a DC function, $D f$ be any global codifferential of $f$, and $x_* \in \mathcal{H}$ be a given point.
Suppose that $f$ is bounded below, and a set $C \subseteq \overline{d} f(x_*)$
is such that $\overline{d} f(x_*) = \cl \co C$ (in particular, if $f = g - h$, then one can set 
$C = \{ (h^*(v) + h(x) - \langle v, x \rangle, -v) \mid v \in \dom h^* \}$). Then
$x_*$ is a globally optimal solution of the problem $(\mathcal{P}_0)$ if and only if for any $z \in C$ one has 
$a(z) \ge 0$, where $(a(z), v(z))$ is a globally optimal solution of the problem
$$
  \min_{(a, v) \in \mathbb{R} \times \mathcal{H}} \| (a, v) \|^2 \quad 
  \text{subject to} \quad (a, v) \in \underline{d} f(x_*) + z.
$$
\end{theorem}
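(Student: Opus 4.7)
My plan is to reduce the global optimality question at $x^*$ to applying part~\ref{Stat_Nonnegative} of Theorem~\ref{Thrm_MinViaGlobalHypodiff} to a family of auxiliary convex functions indexed by $z \in \overline{d} f(x^*)$, and then to collapse this family to the generating set $C$ using affine dependence on $z$.

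First, I would start from the exact increment formula \eqref{GlobalCodiffDef}: for every $\Delta x$,
$$
  f(x^*+\Delta x) - f(x^*) = \inf_{z=(b,w) \in \overline{d} f(x^*)} h_z(\Delta x),
  \qquad h_z(\Delta x) := b + \langle w, \Delta x\rangle + \sup_{(a,v)\in\underline{d} f(x^*)} (a + \langle v,\Delta x\rangle),
$$
since the $\sup$ does not involve $z$. Because the infimum over $z$ lies below any single $h_z$, the point $x^*$ is a global minimizer if and only if $h_z(\Delta x) \ge 0$ for every $\Delta x \in \mathcal{H}$ and every $z \in \overline{d} f(x^*)$.

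Next I would recognize $h_z$ as a closed convex function with affine support set $\underline{d} f(x^*) + z$, and verify that it is bounded below: since $h_z(\Delta x) \ge f(x^*+\Delta x) - f(x^*) \ge \inf_\mathcal{H} f - f(x^*) > -\infty$. Thus part~\ref{Stat_Nonnegative} of Theorem~\ref{Thrm_MinViaGlobalHypodiff} applies to $h_z$, and in combination with Remark~\ref{Remark_MainThrm_GlobSuppSet}(ii) gives: $h_z \ge 0$ on $\mathcal{H}$ if and only if $a(z) \ge 0$, where $(a(z), v(z))$ is the unique nearest point to the origin in $\underline{d} f(x^*) + z$. So $x^*$ is globally optimal iff $a(z) \ge 0$ for every $z \in \overline{d} f(x^*)$.

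Then I would reduce from $\overline{d} f(x^*)$ to the generating set $C$. The key observation is that $h_z(\Delta x)$ is \emph{affine} in $z$, so for each fixed $\Delta x$ the set $\{z : h_z(\Delta x) \ge 0\}$ is closed and convex. Hence $h_z(\Delta x) \ge 0$ for all $z \in C$ implies the same for all $z \in \cl\co C = \overline{d} f(x^*)$, uniformly in $\Delta x$; combined with the previous step this yields the claimed equivalence with $a(z) \ge 0$ for all $z \in C$. The parenthetical concrete choice of $C$ is then justified by Theorem~\ref{Theorem_AffSupport_vs_Conjugate}: the set $\{(-f_2^*(v), v) : v \in \dom f_2^*\}$ generates the smallest affine support set of $f_2$, and the affine bijection in \eqref{GlobalHypodiffDefViaSupportSet} commutes with $\cl\co$, so its image is the corresponding $C \subseteq \overline{d} f(x^*)$ with $\overline{d} f(x^*) = \cl\co C$.

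The main obstacle I anticipate is not any one step individually but getting the logical order right: namely, being careful that the quantifier interchange in step two really is an equivalence (using $\inf h_z \ge 0$ iff each $h_z \ge 0$), and that Theorem~\ref{Thrm_MinViaGlobalHypodiff}(4) is invoked on a convex function for which the boundedness-below hypothesis is genuinely available. Once $h_z$ is in place as the right object to study, the rest is a clean chain of applications of earlier results.
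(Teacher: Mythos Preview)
Your proposal is correct and follows essentially the same route as the paper: write the increment via \eqref{GlobalCodiffDef}, observe that global optimality at $x^*$ is equivalent to nonnegativity of each $h_z$, invoke the boundedness below of $h_z$ inherited from that of $f$, and apply part~\ref{Stat_Nonnegative} of Theorem~\ref{Thrm_MinViaGlobalHypodiff} together with Remark~\ref{Remark_MainThrm_GlobSuppSet}. The only cosmetic difference is that the paper passes to $z\in C$ at the outset (using that the infimum of an affine-in-$z$ expression over $\overline{d} f(x^*)=\cl\co C$ equals the infimum over $C$), whereas you do this reduction at the end; your added justification of the parenthetical choice of $C$ via Theorem~\ref{Theorem_AffSupport_vs_Conjugate} is a welcome detail the paper leaves implicit.
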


\begin{proof}
From the definition of global codifferential mapping and the fact that $\overline{d} f(x_*) = \cl \co C$ it follows
that
$$
  f(x) - f(x_*) = \sup_{(a, v) \in \underline{d} f(x_*)} (a + \langle v, x - x_* \rangle)
  + \inf_{z \in C} (b + \langle w, x - x_* \rangle).
$$
Consequently, $x_*$ is a point of global minimum of $f$ iff for any $z \in C$ one has
$$
  \sup_{(a, v) \in \underline{d} f(x_*) + z} (a + \langle v, x - x_* \rangle) \ge 0 \quad \forall x \in \mathcal{H}.
$$
Note that the function on the left hand side of this inequality is bounded below by 
$\inf_{x \in \mathcal{H}} f(x) - f(x_*) > - \infty$. Hence applying the last part of
Proposition~\ref{Thrm_MinViaGlobalHypodiff} one obtains the desired result (see also
Remark~\ref{Remark_MainThrm_GlobSuppSet}).	
\end{proof}

\begin{corollary}
Let $f$ be a DC function, $D f$ be any global codifferential of $f$, and 
$x_* \in \mathcal{H}$ be a given point. Suppose that $f$ is bounded above, and a set $C \subseteq \underline{d} f(x_*)$
is such that $\underline{d} f(x_*) = \cl \co C$. Then $x_*$ is a point of global maximum of the function $f$ if and only
if for any $z \in C$ one has $b(z) \le 0$, where $(b(z), w(z))$ is a globally optimal solution of the problem
$$
  \min_{(b, w) \in \mathbb{R} \times \mathcal{H}} \| (b, w) \|^2 \quad 
  \text{subject to} \quad (b, w) \in \overline{d} f(x_*) + z.
$$
\end{corollary}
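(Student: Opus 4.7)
The plan is to reduce the corollary to Theorem~\ref{Thrm_GlobOptCond} by passing to $-f$. A point $x^*$ is a global maximizer of $f$ if and only if it is a global minimizer of $-f$, and $f$ is bounded above if and only if $-f$ is bounded below, so both hypotheses transfer without loss. The task therefore reduces to producing the correct global codifferential and distinguished subset for $-f$, and then unwinding the conclusion of Theorem~\ref{Thrm_GlobOptCond}.

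First I would obtain a global codifferential mapping of $-f$ from $Df$. The third part of Proposition~\ref{Prp_GlobalCodiffCalc} (applied with $\lambda = -1$) gives that $D(-f) = [-\overline{d}f, -\underline{d}f]$ is a global codifferential of $-f$ associated with the DC decomposition $-f = f_2 - f_1$. In particular $\underline{d}(-f)(x^*) = -\overline{d}f(x^*)$ and $\overline{d}(-f)(x^*) = -\underline{d}f(x^*)$.

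Next I would exhibit the distinguished subset required by Theorem~\ref{Thrm_GlobOptCond}, namely a set $C' \subseteq \overline{d}(-f)(x^*)$ whose closed convex hull equals $\overline{d}(-f)(x^*)$. The natural choice is $C' = -C$: since $(b,w) \mapsto (-b,-w)$ is a linear homeomorphism of $\mathbb{R} \times \mathcal{H}$, it commutes with $\cl \co$, so the hypothesis $\underline{d}f(x^*) = \cl\co C$ yields
$$
\overline{d}(-f)(x^*) = -\underline{d}f(x^*) = -\cl\co C = \cl\co(-C) = \cl\co C'.
$$

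Finally I would translate the optimality criterion. For each $z \in C$, set $z' = -z \in C'$. Then
$$
\underline{d}(-f)(x^*) + z' = -\overline{d}f(x^*) - z = -\bigl(\overline{d}f(x^*) + z\bigr),
$$
and because $\|(a,v)\|^2 = \|-(a,v)\|^2$, the unique minimum-norm element of the left-hand set is precisely the negative of the unique minimum-norm element of $\overline{d}f(x^*) + z$; that is, $(a(z'), v(z')) = -(b(z), w(z))$. Applying Theorem~\ref{Thrm_GlobOptCond} to $-f$, $D(-f)$, and $C'$, the optimality condition $a(z') \ge 0$ for all $z' \in C'$ becomes $b(z) \le 0$ for all $z \in C$, which is exactly the corollary. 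The only non-trivial point to verify along the way is that the argmin problems on both sides really are singletons (so that negation preserves uniqueness and the notation is consistent), but this follows from strict convexity of $\|\cdot\|^2$ on the non-empty closed convex sets $\overline{d}f(x^*)+z$ and $\underline{d}(-f)(x^*)+z'$; it is the only routine obstacle and is essentially the same one already used in Theorem~\ref{Thrm_GlobOptCond}.
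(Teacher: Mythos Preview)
Your proposal is correct and follows exactly the approach implicit in the paper: the corollary is stated without proof precisely because it is obtained from Theorem~\ref{Thrm_GlobOptCond} by applying it to $-f$ via the third part of Proposition~\ref{Prp_GlobalCodiffCalc}. Your verification of the details (the swap of hypo- and hyperdifferential under negation, the choice $C'=-C$, and the sign flip in the minimum-norm element) is sound.
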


\begin{remark} \label{Remark_GlobalOptimalityConditions}
{(i)~From the proofs of Proposition~\ref{Thrm_MinViaGlobalHypodiff} and Theorem~\ref{Thrm_GlobOptCond} (see also
Remark~\ref{Remark_MainThrm_GlobSuppSet}) it follows that if $x_*$ is not a point of global minimum of the function
$f$, then there exists $z \in C$ such that $a(z) < 0$, and for any such $z \in C$ one has 
$f(x_* + a(z)^{-1}v(z)) < f(x_*)$. Thus, the necessary and sufficient global optimality conditions from the theorem
above not only allow one to verify whether a given point is a global minimizer, but also provide a way to compute
``better'' points, if the optimality conditions are not satisfied. Thus, it is fair to say that the global optimality
conditions in terms of global codifferentials are somewhat constructive. Furthermore, it seems possible to propose a
numerical method for general DC optimization problems based on the global optimality conditions from
Thereom~\ref{Thrm_GlobOptCond} and utilising a certain approximation of global codifferential
(cf.~\cite{BagUgon,TorBagKar}).
}

\noindent{(ii)~It should be noted that the global optimality conditions from Theorem~\ref{Thrm_GlobOptCond} (and
part~\ref{Stat_Nonnegative} of Proposition~\ref{Thrm_MinViaGlobalHypodiff}) were largely inspired by the codifferential
calculus and the method of codifferential descent proposed by Demyanov
\cite{DemRub_book,DemBagRub,Dolgopolik_CodiffDescent,Dolgopolik_CodiffDescent_Global}. As was pointed out in
\cite{Dolgopolik_AbstrConvApprox}, a nonsmooth function $f$ is codifferentiable iff its increment can be locally
approximated by a DC function. In the light of Theorem~\ref{Thrm_GlobOptCond} one can say that in every iteration of
the method of codifferential descent one verifies whether the global optimality conditions from
Theorem~\ref{Thrm_GlobOptCond} are satisfied, and then utilises ``global descent'' directions $v(z)$ of the DC
approximation as line search directions for the objective function (see \cite{Dolgopolik_CodiffDescent_Global} for
more details). Note that this observation partly explains the ability of the method of codifferential descent to ``jump
over'' some points of local minimum of the objective function (see~\cite{DemBagRub,Dolgopolik_CodiffDescent_Global} for
particular examples of this phenomenon).
}

\noindent{(iii)~It is obvious that in many particular cases the global optimality conditions from
Theorem~\ref{Thrm_GlobOptCond} are of theoretical value only, since it is extremely difficult to compute a global
codifferential of a DC function. However, the same statement is true for many other general global optimality
conditions. In particular, it is true for the well-known global optimality condition in terms of
$\varepsilon$-subdifferentials \cite{HiriartUrruty_GlobalDC,HiriartUrruty_95,HiriartUrruty_98} due to the fact that
$\varepsilon$-subdifferentials can be efficiently computed only in few particular cases (see, e.g. \cite{KumarLucet}).
Let us note that in the case when the function $f$ is piecewise affine, there always exists a global codifferential of
the function $f$ such that both sets $\underline{d} f(x)$ and $\overline{d} f(x)$ are convex polytopes
\cite{GorokhovikZorko}. In this case, a global
codifferential of the function $f$ can be computed with the aid of Proposition~\ref{Prp_GlobalCodiffCalc}. See
\cite{Dolgopolik_CodiffDescent_Global} for applications of the optimality conditions from the theorem above to design
and analysis of numerical methods for global optimization of nonconvex piecewise affine functions.
}
\end{remark}

Let us give a simple example illustrating the use of the global optimality conditions from
Theorem~\ref{Thrm_GlobOptCond}.

\begin{example}
Let $\mathcal{H} = \mathbb{R}$, $f(x) = \min\{ 2|x|, |x + 2| + 1 \}$, and $x_0 = -2$. Let us check the optimality
conditions at $x_0$. Note that $x_0$ is a point of strict local minimum of the function $f$, while a global minimum is
attained at the point $x_* = 0$.

Denote $f_1(x) = 2|x|$ and $f_2(x) = |x + 2| + 1$. With the use of Proposition~\ref{Prp_GlobalCodiffCalc} one gets that
\begin{gather*}
  \underline{d} f_1(x_0) = \co\left\{ \begin{pmatrix} 0 \\ -2 \end{pmatrix}, \begin{pmatrix} -8 \\ 2 \end{pmatrix}
  \right\}, \quad \overline{d} f_1(x_0) = \{ 0 \}, \\
  \underline{d} f_2(x_0) = \co\left\{ \begin{pmatrix} 0 \\ 1 \end{pmatrix}, \begin{pmatrix} 0 \\ -1 \end{pmatrix}
  \right\}, \quad \overline{d} f_2(x_0) = \{ 0 \}
\end{gather*}
(here the first coordinate is $a$, and the second one is $v$). Observe that unlike all subdifferentials, a
codifferential is a pair of \textit{two}-dimensional convex sets even in the one-dimensional case. Applying
Proposition~\ref{Prp_GlobalCodiffCalc} again one obtains that
$$
  \underline{d} f(x_0) = \underline{d} f_1(x_0) + \underline{d} f_2(x_0) =
  \co\left\{ \begin{pmatrix} 0 \\ -1 \end{pmatrix}, \begin{pmatrix} 0 \\ -3 \end{pmatrix},
  \begin{pmatrix} -8 \\ 3 \end{pmatrix}, \begin{pmatrix} -8 \\ 1 \end{pmatrix}
  \right\},
$$
and
$$
  \overline{d} f(x_0) = \co\left\{ \begin{pmatrix} 3 \\ 0 \end{pmatrix} - \underline{d} f_2(x_0), 
  - \underline{d} f_1(x_0) \right\}
  = \co\left\{ \begin{pmatrix} 3 \\ 1 \end{pmatrix}, \begin{pmatrix} 3 \\ -1 \end{pmatrix},
  \begin{pmatrix} 0 \\ 2 \end{pmatrix}, \begin{pmatrix} 8 \\ -2 \end{pmatrix}
  \right\}.
$$
Let $C$ be the set of extreme points of $\overline{d} f(x_0)$, i.e.
$$
  C = \left\{ \begin{pmatrix} 3 \\ 1 \end{pmatrix}, \begin{pmatrix} 3 \\ -1 \end{pmatrix},
  \begin{pmatrix} 0 \\ 2 \end{pmatrix}, \begin{pmatrix} 8 \\ -2 \end{pmatrix}
  \right\}.
$$
Then one can easily verify that
\begin{enumerate}
\item{$0 \in \underline{d} f(x_0) + z$ for $z = (3, 1) \in C$, $z = (0, 2) \in C$, and $z = (8, -2) \in C$;
}

\item{$(a(z), v(z)) = (-0.2, -0.4)$ for $z = (3, -1) \in C$.
}
\end{enumerate}
Thus, the global optimality conditions from Theorem~\ref{Thrm_GlobOptCond} are not satisfied. Furthermore, note that
for $z = (3, -1)$ one has $x(z) = x_0 + {a(z)}^{-1} v(z) = 0$, i.e. $x(z)$ is a point of global minimum of the function
$f$.
\end{example}

Now we turn to constrained DC optimization problems. We start with the case of inequality constrained problems, since
the presence of equality constraints significantly complicates the derivation of optimality conditions. Below, we
largely follow Proposition~\ref{Theorem_MinOnConvSetViaHypodiff}, but do not apply it directly, since, as one can
verify, a direct application of this theorem leads to more restrictive regularity assumptions on the constraints.

Consider the optimization problem of the form
$$
  \min_{x \in \mathcal{H}} f_0(x) \quad \text{subject to} \quad f_i(x) \le 0, \quad i \in I,
  \eqno{(\mathcal{P}_I)}
$$
where $f_i$, $i \in 0 \cup I$, $I = \{ 1, \ldots, l \}$, are DC functions. Denote by
$\Omega$ the feasible region of this problem. To obtain global optimality conditions for this problem we need to impose
a regularity assumption on the constraints. Namely, one says that \textit{the interior point constraint qualification}
(IPCQ) holds at a point $x_0 \in \Omega$, if $x_0 \in \cl \{ x \in \mathcal{H} \mid f_i(x) < 0 \: i \in I \}$ or,
equivalently, if for any $\varepsilon > 0$ there exists $y \in \Omega$ such that $\| y - x \| < \varepsilon$, and
$f_i(y) < 0$ for all $i \in I$. It is easy to see that in the case when the functions $f_i$, $i \in I$, are convex, 
IPCQ is equivalent to Slater's condition. Note also that IPCQ holds at $x_0$, in particular, if a nonsmooth
Mangasarian-Fromovitz constraint qualification (MFCQ) holds true at this point, i.e. if there exists 
$v \in \mathcal{H}$ such that $f_i'(x_0, v) < 0$ for all $i \in I$ such that $f_i(x_0) = 0$, where $f_i'(x_0, v)$ is the
directional derivative of $f_i$ at $x_0$ in the direction $v$. Finally, it should be noted that IPCQ is a
generalization of the robustness condition from \cite{HorstThoai}.

\begin{theorem} \label{Theorem_GlobOptCond_InequalConst}
Let there exist a globally optimal solution of the problem $(\mathcal{P}_I)$  such that IPCQ holds true at this
solution, and let $x_*$ be a feasible point of  $(\mathcal{P}_I)$. Let also the function $f_0$ be
bounded below on $\Omega$, and $D f_i$ be a global codifferential of $f_i$, $i \in I \cup \{ 0 \}$. Suppose, finally,
that $C_i \subseteq \overline{d} f_i(x_*)$ is a nonempty set such that $\overline{d} f_i(x_*) = \cl\co C_i$, 
$i \in I \cup \{ 0 \}$. Then $x_*$ is a globally optimal solution of the problem $(\mathcal{P}_I)$ if and only
if for any $z_i \in C_i$, $i \in I \cup \{ 0 \}$, one has $a(z) \ge 0$, where $(a(z), v(z))$ with
$z = (z_0, z_1, \ldots, z_l)$ is a globally optimal solution of the problem
$$
  \min_{(a, v) \in \mathbb{R} \times \mathcal{H}} \| (a, v) \|^2 \quad 
  \text{subject to} \quad (a, v) \in L(z)
$$
and 
\begin{equation} \label{TrickPenaltyInequal}
  L(z) = \cl\co\{ \underline{d} f_0(x_*) + z_0, \: \underline{d} f_i(x_*) + z_i + (f_i(x_*), 0) \mid i \in I \}.
\end{equation}
\end{theorem}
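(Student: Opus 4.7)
The plan is to reduce constrained optimality of $x^*$ to the nonnegativity of a family of convex auxiliary functions whose affine support sets are precisely the $L(z)$ of the statement, and then to invoke part~\ref{Stat_Nonnegative} of Theorem~\ref{Thrm_MinViaGlobalHypodiff}.

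First, I would introduce the auxiliary function
$$
  F(x) = \max\bigl\{ f_0(x) - f_0(x^*), \, f_i(x) : i \in I \bigr\},
$$
which satisfies $F(x^*) = 0$ because $x^*$ is feasible. A case split on feasibility shows that if $x^*$ is globally optimal then $F \ge 0$ on $\mathcal{H}$. For the converse, IPCQ is used essentially: if $x^*$ were not globally optimal, then the assumed globally optimal $\bar{x}$ satisfies $f_0(\bar{x}) < f_0(x^*)$, IPCQ at $\bar{x}$ supplies strictly feasible $x_n \to \bar{x}$, and continuity of DC functions yields $F(x_n) < 0$ for large $n$. Moreover $F$ is bounded below: $F > 0$ off $\Omega$, while $F \ge \min\{0, \inf_{\Omega} f_0 - f_0(x^*)\}$ on $\Omega$ by the hypothesis that $f_0$ is bounded below on $\Omega$.

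Second, I would translate $F \ge 0$ into a convex statement. Fix DC decompositions $f_j = f_{j1} - f_{j2}$. Via the bijection in \eqref{GlobalHypodiffDefViaSupportSet}, each $z_j \in \overline{d} f_j(x^*)$ corresponds to an affine minorant $\ell_j$ of $f_{j2}$. Set
$$
  \phi_z(x) = \max\bigl\{ f_{01}(x) - f_0(x^*) - \ell_0(x), \, f_{i1}(x) - \ell_i(x) : i \in I \bigr\}.
$$
A direct computation of the support function of $L(z)$ at $(1, x - x^*)$, in which the identity $-f_{i1}(x^*) + f_{i2}(x^*) + f_i(x^*) = 0$ absorbs the shift $(f_i(x^*), 0)$, shows that $L(z)$ is an affine support set of $y \mapsto \phi_z(y + x^*)$. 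Using $\sup_{\ell_j} \ell_j(x) = f_{j2}(x)$ together with the elementary interchange $\inf_\ell \max_j = \max_j \inf_{\ell_j}$, valid because the $\ell_j$ enter $\phi_z$ separately, one gets $\inf_{\ell_0, \ldots, \ell_l} \phi_z(x) = F(x)$; hence $F \ge 0$ on $\mathcal{H}$ if and only if $\phi_z \ge 0$ on $\mathcal{H}$ for every $z_j \in \overline{d} f_j(x^*)$.

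Third, I would apply part~\ref{Stat_Nonnegative} of Theorem~\ref{Thrm_MinViaGlobalHypodiff} (in the form of Remark~\ref{Remark_MainThrm_GlobSuppSet}(ii)) to each $\phi_z$: since $\phi_z \ge F$ pointwise and $F$ is bounded below, $\phi_z$ is bounded below, so $\phi_z \ge 0$ iff $a(z) \ge 0$. To replace the quantifier ``$z_j \in \overline{d} f_j(x^*)$'' by ``$z_j \in C_j$'', I would observe that, for fixed $x$ and $z_k$ ($k \ne j$), the set $\{z_j \in \overline{d} f_j(x^*) : \phi_z(x) \ge 0\}$ is closed and convex --- it is either the whole set or its intersection with a closed halfspace in $z_j$, since the $j$-th piece of $\phi_z$ depends affinely on $z_j$ --- so it contains $\cl\co C_j = \overline{d} f_j(x^*)$ as soon as it contains $C_j$; iterating over $j$ gives the desired reduction. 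The main obstacle will be the bookkeeping in the second step: carefully matching the bijection between $z_j \in \overline{d} f_j(x^*)$ and $(\tilde b_j, \tilde w_j) \in S_{f_{j2}}$ with the support-function computation, and verifying the cancellation that absorbs the shift $(f_i(x^*), 0)$.
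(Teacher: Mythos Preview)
Your proposal is correct and follows essentially the same route as the paper: both introduce the auxiliary function $F(x)=\max\{f_0(x)-f_0(x^*),\,f_i(x):i\in I\}$, show via IPCQ and continuity that global optimality of $x^*$ is equivalent to $F\ge 0$, decompose $F$ into the convex majorants $\phi_z$ (the paper calls them $h$) with affine support set $L(z)$, and then invoke part~\ref{Stat_Nonnegative} of Theorem~\ref{Thrm_MinViaGlobalHypodiff}. The only cosmetic difference is that the paper passes from $\overline{d} f_i(x^*)$ to $C_i$ by the obvious ``infimum over a set equals infimum over its closed convex hull'' observation, whereas you spell this out as a coordinate-wise closed-convex-set argument; both are fine.
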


\begin{proof}
Let us utilise a global version of the standard trick (see, e.g. the classic paper \cite{Ioffe_Penalty}) to transform
the problem $(\mathcal{P}_I)$ into an unconstrained optimization problem. Introduce the function
$$
  F(x) = \max\{ f_0(x) - f_0(x_*), f_1(x), \ldots, f_l(x) \}.
$$
Note that $F(x_*) = 0$, since $x_*$ is a feasible point of 
$(\mathcal{P}_I)$. Let us check that $x_*$ is a globally optimal solution of the problem $(\mathcal{P}_I)$ iff it is a
point of global minimum of the function $F$. 

Indeed, suppose that $x_*$ is a globally optimal solution of the problem $(\mathcal{P}_I)$. Observe that if
$F(x) < 0$ for some $x \in \mathcal{H}$, then $x \in \Omega$ and $f_0(x) < f_0(x_*)$, which is impossible. Thus, 
$F(x) \ge F(x_*) = 0$ for any $x \in \mathcal{H}$, i.e. $x_*$ is a point of global minimum of the function $F$.
Conversely, let $x_*$ be a point of global minimum of $F$. By definition $F(x) \ge F(x_*) = 0$ for all 
$x \in \mathcal{H}$. Hence, in particular, for any $x$ such that $f_i(x) < 0$ for all $i \in I$ one has 
$f_0(x) \ge f_0(x_*)$. Thus, $x_*$ is a globally optimal solution of the problem
$$
  \min_{x \in \mathcal{H}} f_0(x) \quad \text{subject to} \quad 
  x \in \{ x_* \} \cup \{ y \in \mathcal{H} \mid f_i(y) < 0 \: \forall i \in I \}.
$$
Note that the function $f_0$ is continuous as the difference of finite closed convex functions that are continuous
due to the fact that $\mathcal{H}$ is a Hilbert space (see, e.g. \cite[Corollary~I.2.5]{EkelandTemam}). Therefore,
taking into account the fact that by our assumption IPCQ holds true at some globally optimal solution $y_*$ of the
problem $(\mathcal{P}_I)$ one obtains that $f_0(x_*) \le f_0(y_*)$, which implies that $x_*$ is a globally optimal
solution of $(\mathcal{P}_I)$ as well. Thus, $x_*$ is a globally optimal solution of the 
problem $(\mathcal{P}_I)$ iff $x_*$ is a point of global minimum of the function $F$.

From the definition of global codifferential it follows that
\begin{multline*}
  F(x) = \max_{i \in I} 
  \Big\{ \sup_{(a, v) \in \underline{d} f_0(x_*)} (a + \langle v, x - x_* \rangle) +
  \inf_{(b, w) \in \overline{d} f_0(x_*)} (b + \langle w, x - x_* \rangle), \\
  f_i(x_*) + \sup_{(a, v) \in \underline{d} f_i(x_*)} (a + \langle v, x - x_* \rangle) +
  \inf_{(b, w) \in \overline{d} f_i(x_*)} (b + \langle w, x - x_* \rangle)
  \Big\}.
\end{multline*}
Therefore, as is easy to see, $x_*$ is a point of global minimum of the function $F$ iff for any 
$z_i \in C_i$, $i \in I \cup \{ 0 \}$ the function
\begin{align*}
  F_z(x) = \max_{i \in I} 
  \Big\{ &\sup_{(a, v) \in \underline{d} f_0(x_*) + z_0} (a + \langle v, x - x_* \rangle), \\
  &f_i(x_*) + \sup_{(a, v) \in \underline{d} f_i(x_*) + z_i} (a + \langle v, x - x_* \rangle)
  \Big\}
\end{align*}
is nonnegative. Note that 
$F_z(x) \ge F(x) \ge f_0(x) - f_0(x_*) \ge \inf_{x \in \Omega} f_0(x) - f_0(x_*) > - \infty$ for any 
$x \in \Omega$, and $F_z(x) \ge F(x) > 0$ for any $x \notin \Omega$, i.e. the function $F_z$ is bounded below.
Consequently, taking into account the fact that the set \eqref{TrickPenaltyInequal} is an affine support set of
$F_z$, and applying the last part of Proposition~\ref{Thrm_MinViaGlobalHypodiff} one obtains the desired result.

\end{proof}

\begin{remark} \label{Remark_InteriorPointStep}
{(i)~As in the case of Theorem~\ref{Thrm_GlobOptCond}, the global optimality conditions from the theorem above are
somewhat constructive. Namely, one can easily verify that if $x_*$ is not a globally optimal solution of the problem
$(\mathcal{P}_I)$, then for any $z_i \in C_i$, $i \in I \cup \{ 0 \}$ such that $a(z) < 0$ (note that such
$z_i$ exist by Theorem~\ref{Theorem_GlobOptCond_InequalConst}) one has 
$f_0( x_* + a(z)^{-1} v(z) ) < f_0(x_*)$ and $f_i( x_* + a(z)^{-1} v(z) ) < 0$ for all $i \in I$. Thus, if $x_*$ is not
a globally optimal solution of the problem $(\mathcal{P}_I)$, then with the use of the global optimality conditions
from Theorem~\ref{Theorem_GlobOptCond_InequalConst} one can find a ``better'' point in the interior of the feasible
region (see Example~\ref{Example_InequalConstrProblem} below).
}

\noindent{(ii)~Note that if $x_*$ is a point of global minimum of the function $F(x)$ defined in the proof of the
theorem above, but IPCQ does not hold true at any globally optimal solution of the problem $(\mathcal{P}_I)$, then
$x_*$ need not be a globally optimal solution of this problem. For example, if $l = 2$, $f_1(x) = \| x \| - 1$, 
$f_2(x) = 1 - \| x \|$, then IPCQ does not hold true at any feasible point of $(\mathcal{P}_I)$, and
any feasible point $x_*$ is a global minimizer of $F(x)$. Thus, the validity of IPCQ is, in essence, necessary for the
validity of the global optimality conditions from the theorem above. Furthermore, this example shows that
Theorem~\ref{Theorem_GlobOptCond_InequalConst} cannot be applied to equality constrained problems, since IPCQ fails to
hold true, if one rewrites an equality constraint $f_i(x) = 0$ as two inequality constraints $f_i(x) \le 0$ and 
$- f_i(x) \le 0$.
}
\end{remark}

Let us give a simple example illustrating Theorem~\ref{Theorem_GlobOptCond_InequalConst}.

\begin{example} \label{Example_InequalConstrProblem}
Let $\mathcal{H} = \mathbb{R}$, and the problem $(\mathcal{P}_I)$ have the form
\begin{equation} \label{ExProblem_InequalConstr}
  \min_{x \in \mathbb{R}} f_0(x) = |x - 4| \quad \text{subject to} \quad
  f_1(x) = \min\{ |x - 2|, |x + 2| \} - 1 \le 0.
\end{equation}
Let also $x_0 = -1$. It is easily seen that $\Omega = [-3, -1] \cup [1, 3]$, IPCQ holds true at the unique globally
optimal solution $x_* = 3$ of problem \eqref{ExProblem_InequalConstr}, and $x_0$ is a locally optimal solution of
this problem.  Let us check the global optimality conditions at the point $x_0$.

With the use of Proposition~\ref{Prp_GlobalCodiffCalc} one obtains that
\begin{align*}
  \underline{d} f_0(x_0) &= \co\left\{ \begin{pmatrix} -10 \\ 1 \end{pmatrix}, 
  \begin{pmatrix} 0 \\ -1 \end{pmatrix} \right\}, \quad \overline{d} f_0(x_0) = \{ 0 \}, \\
  \underline{d} f_1(x_0) &= \co\left\{ \begin{pmatrix} -6 \\ 2 \end{pmatrix}, \begin{pmatrix} -8 \\ 0 \end{pmatrix},
  \begin{pmatrix} 0 \\ 0 \end{pmatrix}, \begin{pmatrix} -2 \\ -2 \end{pmatrix} \right\}, \\
  \overline{d} f_1(x_0) &= \co\left\{ \begin{pmatrix} 2 \\ -1 \end{pmatrix}, \begin{pmatrix} 4 \\ 1 \end{pmatrix},
  \begin{pmatrix} 6 \\ -1 \end{pmatrix}, \begin{pmatrix} 0 \\ 1 \end{pmatrix} \right\}.
\end{align*}
Let $C_0 = \{ 0 \}$, and $C_1$ be the set of extreme points of $\overline{d} f_1(x_0)$. Then applying
Theorem~\ref{Theorem_GlobOptCond_InequalConst} one can check that
\begin{enumerate}
\item{$0 \in L(z)$ for $z = (z_0, z_1)$ with $z_0 = (0, 0) \in C_0$, $z_1 = (4, 1) \in C_1$, 
$z_1 = (6, -1) \in C_1$, and $z_1 = (0, 1) \in C_1$;
}

\item{$(a(z), v(z)) = (-0.1, -0.3)$ for $z = (z_0, z_1)$ with $z_0 = (0, 0) \in C_0$ and 
$z_1 = (2, -1) \in C_1$.
}
\end{enumerate}
Thus, the global optimality conditions from Theorem~\ref{Theorem_GlobOptCond_InequalConst} are not satisfied.
Furthermore, note that in the case $z_1 = (2, -1)$ one has $x_1 = x_0 + a(z)^{-1} v(z) = 2$, 
$f_0(x_1) = 2 < 5 = f_0(x_0)$ and $f_1(x_1) = -1 < 0$.
\end{example}

Now we turn to the general constrained optimization problem of the form
$$
  \min_{x \in \mathcal{H}} f_0(x) \quad 
  \text{s.t.} \quad f_i(x) \le 0, \quad i \in I, \quad f_j(x) = 0, \quad j \in J,
  \eqno{(\mathcal{P}_{IJ})}
$$
where $f_i$, $i \in 0 \cup I \cup J$, $I = \{ 1, \ldots, l \}$, 
$J = \{ l + 1, \ldots, m \}$ are DC functions. Denote by $\Omega$ the feasible region of the problem 
$(\mathcal{P}_{IJ})$, and introduce the function
$$
  \varphi(x) = \sum_{i = 1}^l \max\{ 0, f_i(x) \} + \sum_{j = l + 1}^m |f_j(x)|.
$$
Observe that $\Omega = \{ x \in \mathcal{H} \mid \varphi(x) = 0 \}$.

Our aim is to provide simple sufficient conditions under which the merit function 
$F_{\lambda}(x) = f_0(x) + \lambda \varphi(x)$ for the problem $(\mathcal{P}_{IJ})$ is globally exact in the finite
dimensional case (note that this function is DC, if the problem $(\mathcal{P}_{IJ})$ is DC). Apart from its
direct applications to the design of numerical methods for solving the problem $(\mathcal{P}_{IJ})$, this result can
also be used for the derivation of global optimality conditions for the problem $(\mathcal{P}_{IJ})$. 

Recall that the function $F_{\lambda}$ is said to be (globally) exact for the problem $(\mathcal{P}_{IJ})$, if there
exists $\lambda_* \ge 0$ such that for any $\lambda \ge \lambda_*$ the set of globally optimal solutions of the problem
$(\mathcal{P}_{IJ})$ coincides with the set of global minimizers of the function $F_{\lambda}$, i.e. the problem
$(\mathcal{P}_{IJ})$ is equivalent (in terms of globally optimal solutions) to the penalized problem
$$
  \min_{x \in \mathcal{H}} F_{\lambda}(x)	\eqno{(\mathcal{P}_{\lambda})}
$$
for any $\lambda \ge \lambda_*$. The greatest lower bound
of all such $\lambda_*$ is called \textit{the least exact penalty parameter} of the function $F_{\lambda}$.

\begin{theorem} \label{Thrm_GlobalExactPenFunc_DCProblem}
Let $\mathcal{H}$ be finite dimensional. Suppose that $\varphi$ has a local error bound at every globally optimal
solution of the problem $(\mathcal{P}_{IJ})$, i.e. for any globally optimal solution $x_*$ of this problem 
there exist $\tau > 0$ and a neighbourhood $U$ of $x_*$ such that
\begin{equation} \label{PenTermErrorBound}
  \varphi(x) \ge \tau \dist(x, \Omega)  \quad \forall x \in U.
\end{equation}
Then the function $F_{\lambda}$ is globally exact if and only if there exists $\lambda \ge 0$ such that the set
$\{ x \in \mathcal{H} \mid F_{\lambda}(x) < f_* \}$ is either bounded or empty, where $f_*$ is the optimal value of
the problem $(\mathcal{P}_{IJ})$. In particular, $F_{\lambda}$ is globally exact, provided this function is bounded
below for some $\lambda \ge 0$, and the set
$$
  C_{\alpha} = 
  \{ x \in \mathcal{H} \mid f_0(x) < f_* + \alpha, \: f_i(x) < \alpha, \: i \in I, \: |f_j(x)| < \alpha, \: j \in J \}
$$
is bounded for some $\alpha > 0$.
\end{theorem}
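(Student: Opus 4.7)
My plan is to prove the biconditional directly and then deduce the ``in particular'' claim as a short consequence of the iff. The necessity direction is the easy one: if $F_\lambda$ is globally exact with threshold $\lambda^*$, then for any $\lambda \ge \lambda^*$ and any globally optimal solution $x^*$ of \eqref{Problem_DC} one has $F_\lambda(x^*) = f_0(x^*) = f^*$ and $x^* \in \argmin F_\lambda$, so $\inf F_\lambda = f^*$ and hence $\{x \in \mathcal{H} \mid F_\lambda(x) < f^*\} = \emptyset$.

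For sufficiency, the core of the proof is to upgrade the assumption on $\lambda_0$ to the stronger statement that $\{F_{\lambda_1} < f^*\} = \emptyset$ for some $\lambda_1 \ge \lambda_0$. If $\{F_{\lambda_0} < f^*\}$ is already empty this is immediate by the monotonicity of $\lambda \mapsto F_\lambda$. When the set is merely bounded, I argue by contradiction: take $\lambda_n \uparrow \infty$ and $x_n$ with $F_{\lambda_n}(x_n) < f^*$; monotonicity forces $x_n \in \{F_{\lambda_0} < f^*\}$, and finite-dimensional compactness yields a subsequence with $x_n \to \bar x$. Because $f_0$ is continuous and bounded on the compact closure of this set, the inequality $\lambda_n \varphi(x_n) < f^* - f_0(x_n)$ forces $\varphi(x_n) \to 0$, so $\bar x \in \Omega$; passing to the limit in $f_0(x_n) < f^*$ also yields $f_0(\bar x) \le f^*$, so $\bar x$ is globally optimal.

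The local error bound now applies at $\bar x$: $\varphi(x_n) \ge \tau \dist(x_n, \Omega)$ for all sufficiently large $n$. Since $\mathcal{H}$ is finite dimensional, the DC function $f_0$ is locally Lipschitz around $\bar x$ with some constant $L$; picking $y_n \in \Omega$ realising $\dist(x_n, \Omega)$ (which exists because $\Omega$ is closed), I obtain
\[
  f_0(x_n) \ge f_0(y_n) - L \dist(x_n, \Omega) \ge f^* - \frac{L}{\tau} \varphi(x_n),
\]
and therefore $F_{\lambda_n}(x_n) \ge f^* + (\lambda_n - L/\tau)\varphi(x_n) \ge f^*$ once $\lambda_n \ge L/\tau$, contradicting $F_{\lambda_n}(x_n) < f^*$. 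Once such a $\lambda_1$ is in hand, exactness for every $\lambda > \lambda_1$ follows by a short direct calculation: any $x \in \argmin F_\lambda$ satisfies $F_\lambda(x) = f^*$, so $F_{\lambda_1}(x) = f^* - (\lambda - \lambda_1)\varphi(x) \le f^*$; combined with $F_{\lambda_1} \ge f^*$ everywhere, this forces $\varphi(x) = 0$ and hence $f_0(x) = f^*$, while conversely every globally optimal point attains $F_\lambda = f^*$.

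For the ``in particular'' statement it suffices to verify the iff hypothesis from the given assumptions. Writing $F_{\lambda_0} \ge M$, for $\lambda > \lambda_0$ and any $x \in \{F_\lambda < f^*\}$ the estimates $f_0(x) \ge M - \lambda_0 \varphi(x)$ and $f_0(x) + \lambda \varphi(x) < f^*$ combine to give $\varphi(x) < (f^* - M)/(\lambda - \lambda_0)$, which falls below $\alpha$ once $\lambda$ is large enough; then $f_i(x) < \alpha$ for $i \in I$ and $|f_j(x)| < \alpha$ for $j \in J$, and also $f_0(x) < f^* < f^* + \alpha$, so $\{F_\lambda < f^*\} \subseteq C_\alpha$ is bounded and the biconditional yields exactness. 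The main obstacle is the compactness-plus-error-bound step, where one has to fuse the closedness of $\Omega$, the local Lipschitz continuity of the finite DC function $f_0$, the local error bound for $\varphi$, and a subsequential limit argument into a single contradiction.
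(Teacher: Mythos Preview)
Your argument is correct and complete, but it follows a genuinely different route from the paper. The paper does not prove the biconditional by hand: it observes that $f_0$, being a finite DC function, is locally Lipschitz, and then invokes two external results from the author's earlier work on exact penalty functions --- first a theorem guaranteeing that the local error bound \eqref{PenTermErrorBound} yields \emph{local} exactness of $F_\lambda$ at each global minimizer, and then a ``localization principle'' asserting that local exactness at every global minimizer is equivalent to global exactness precisely when some sublevel set $\{F_\lambda < f^*\}$ is bounded or empty. Your proof, by contrast, is entirely self-contained: you extract a convergent sequence from the bounded sublevel set, identify the limit as a global minimizer, and then combine the error bound with local Lipschitz continuity of $f_0$ near that limit to reach a contradiction --- in effect reconstructing the relevant piece of the localization principle for this specific setting. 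The payoff of your approach is transparency and independence from the cited machinery; the paper's approach is shorter on the page and situates the result within a broader theory of exact penalties. Your treatment of the ``in particular'' clause matches the paper's almost exactly.
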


\begin{proof}
Let $x_*$ be a globally optimal solution of  $(\mathcal{P}_{IJ})$. Note that the function $f_0$ is locally
Lipschitz continuous, since it is a finite DC function. Consequently, taking into account \eqref{PenTermErrorBound}
and applying \cite[Thrm~2.4 and Prp.~2.7]{Dolgopolik_ExactPenFunc} one obtains that the function $F_{\lambda}$
is locally exact at $x_*$, i.e. there exist $\lambda_*(x_*) \ge 0$ and a neighbourhood $U$ of $x_*$ such that
$F_{\lambda}(x) \ge F_{\lambda}(x_*)$ for all $x \in U$ and $\lambda \ge \lambda_*(x_*)$. Then applying the
localization principle for linear penalty functions (see~\cite[Thrm.~3.17]{Dolgopolik_ExactPenFunc}
and \cite[Thrm~4.1]{Dolgopolik_UnifiedExact}) one gets that the function $F_{\lambda}$ is globally exact if and
only if there exists $\lambda \ge 0$ such that the set $\{ x \in \mathcal{H} \mid F_{\lambda}(x) < f_* \}$ is either
bounded or empty. 

Suppose that $F_{\lambda_0}$ is bounded below for some $\lambda_0 \ge 0$, and the set $C_{\alpha}$ is bounded
for some $\alpha > 0$. Let us check that in this case $\{ x \mid F_{\lambda}(x) < f_* \} \subset C_{\alpha}$ for any
sufficiently large $\lambda$.

Indeed, if $x \notin C_{\alpha}$, then either $f_0(x) \ge f_* + \alpha$ or $\varphi(x) \ge \alpha$. In the former case
one has $F_{\lambda}(x) \ge f_0(x) > f_*$ for any $\lambda \ge 0$, while in the latter case one has
$$
  F_{\lambda}(x) = 
  F_{\lambda_0}(x) + (\lambda - \lambda_0) \varphi(x) \ge c + (\lambda - \lambda_0) \alpha > f_*
$$
for all $\lambda > \lambda_0 + (f_* - c) / \alpha$, where $c = \inf_{x \in \mathcal{H}} F_{\lambda_0}(x)$. Thus, 
$\{ x \mid F_{\lambda}(x) < f_* \} \subset C_{\alpha}$ for any $\lambda > \lambda_0 + (f_* - c) / \alpha$.	
\end{proof}

\begin{remark}
{(i)~Our proof of the global exactness of the $\ell_1$ penalty function is based on the assumption that the penalty
term $\varphi(x)$ has a local error bound. This assumption can be verified with the use of general results on metric
subregularity and local error bounds \cite{Aze,Gfrerer,Kruger2014}. In particular, in the case when the functions $f_i$
are continuously differentiable at a globally optimal solution $x_*$ of $(\mathcal{P}_{IJ})$, the function
$\varphi(x)$ has a local error bound at this optimal solution, provided MFCQ holds at $x_*$ (see, e.g.
\cite[Corollary~2.2]{Cominetti}). Let us note that in some cases it is possible to prove the existence of a local error
bound with the use of the DC structure of the problem alone (i.e. without any constraint qualifications). See
\cite{ThisDingNgai} for this kind of results on exact penalty functions and error bounds for DC optimization
problems with inequality constraints.
}

\noindent{(ii)~Note that Theorem~\ref{Thrm_GlobalExactPenFunc_DCProblem} significantly improves
\cite[Proposition~1]{Strekalovsky2017_2}, since we do not assume that the objective function $f_0$ is globally
Lipschitz continuous, and utilise a local error bound instead of the global one in \cite{Strekalovsky2017_2}.
Furthermore, we obtained necessary and sufficient conditions for the global exactness of the function
$F_{\lambda}(x)$, while only sufficient conditions were considered in \cite{Strekalovsky2017_2}.
}
\end{remark}

Applying the global optimality conditions from Theorem~\ref{Thrm_GlobOptCond} to $F_{\lambda}(x)$ one can easily obtain
new necessary and sufficient global optimality conditions for the problem $(\mathcal{P}_{IJ})$ that are valid under the
assumptions of Theorem~\ref{Thrm_GlobalExactPenFunc_DCProblem}. Namely, the following result holds true.

\begin{theorem} \label{Theorem_GlobOptCond_BothTypesConstr}
Let $\mathcal{H}$ be finite dimensional. Suppose that $\varphi$ has a local error bound at every globally optimal
solution of the problem $(\mathcal{P}_{IJ})$, and there exists $\lambda \ge 0$ such that the set
$\{ x \in \mathcal{H} \mid F_{\lambda}(x) < f_* \}$ is either bounded or empty. Suppose also that $x_*$ is a feasible
point of the problem $(\mathcal{P}_{IJ})$, $D f_k$ is a global codifferential of the function $f_k$, 
$C_k \subseteq \overline{d} f_k(x_*)$ is a nonempty set such that $\overline{d} f_k(x_*) = \cl \co C_k$, 
$k \in I \cup J \cup \{ 0 \}$, and $D_j \subseteq \underline{d} f_j(x_*)$ is a nonempty set such that 
$\underline{d} f_j(x_*) = \cl \co D_j$, $j \in J$. Then $x_*$ is a globally optimal solution of the problem
$(\mathcal{P}_{IJ})$ if and only if there exists $\lambda \ge 0$ such that for any $z_k \in C_k$, 
$k \in I \cup J \cup \{ 0 \}$ and $u_j \in D_j$, $j \in J$, one has
$a(z) \ge 0$, where $(a(z), v(z))$ with $z = (z_0, z_1, \ldots, z_m, u_{l + 1}, \ldots, u_m)$ is a globally optimal
solution of the problem
$$
  \min_{(a, v) \in \mathbb{R} \times \mathcal{H}} \| (a, v) \|^2 \quad
  \text{subject to} \quad (a, v) \in Q_{\lambda}(z),
$$
where
\begin{align*}
  Q_{\lambda}(z) = \cl\bigg\{ &\underline{d} f_0(x_*) + \lambda \sum_{i \in I} 
  \co\big\{ (f_i(x_*), 0) + \underline{d} f_i(x_*), - \overline{d} f_i(x_*) \mid i \in I \big\} \\
  &+ \lambda \sum_{j \in J} \co\big\{ \underline{d} f_j(x_*) + \underline{d} f_j(x_*), 
  - \overline{d} f_j(x_*) - \overline{d} f_j(x_*) \mid j \in J\big\} \bigg\} \\
  &+ z_0 + \lambda \sum_{i \in I} z_i + \lambda \sum_{j \in J} (z_j - u_j).
\end{align*}
\end{theorem}

\begin{proof}
As was noted above, $F_{\lambda}$ is a DC function. With the use of Proposition~\ref{Prp_GlobalCodiffCalc} one
can verify that the pair $D F_{\lambda} = [\underline{d} F_{\lambda}, \overline{d} F_{\lambda}]$ with
\begin{gather*}
  \underline{d} F_{\lambda}(x) = \cl\bigg\{ \underline{d} f_0(x) 
  + \lambda \sum_{i \in I} \co\big\{ (f_i(x), 0) + \underline{d} f_i(x), - \overline{d} f_i(x) \mid i \in I \big\} \\
  + \lambda \sum_{j \in J} \co\big\{ \underline{d} f_j(x) + \underline{d} f_j(x), 
  - \overline{d} f_j(x) - \overline{d} f_j(x) \mid j \in J\big\} \bigg\}, \\
  \overline{d} F_{\lambda}(x) = \cl\bigg\{ \overline{d} f_0(x) + \lambda \sum_{i \in I} \overline{d} f_i(x) 
  + \lambda \sum_{j \in J} \big( \overline{d} f_j(x) - \underline{d} f_j(x) \big) \bigg\}
\end{gather*}
for any feasible point $x$ is a global codifferential of $F_{\lambda}$. Observe also that 
$$
  \overline{d} F_{\lambda}(x_*) = \cl\co\Big\{ C_0 + \lambda \sum_{i \in I} C_i 
  + \lambda \sum_{j \in J} \big( C_j - D_j \big) \Big\}
$$
by the definitions of the sets $C_k$ and $D_j$.

By Theorem~\ref{Thrm_GlobalExactPenFunc_DCProblem} the function $F_{\lambda}$ is globally exact. Therefore, if $x_*$ is
a globally optimal solution of the problem $(\mathcal{P}_{IJ})$, then it is a point of global minimum of
$F_{\lambda}$ for any sufficiently large $\lambda$. Now, applying the global optimality conditions from
Theorem~\ref{Thrm_GlobOptCond} to the function $F_{\lambda}$ at $x_*$ with $\lambda$ large enough one obtains that the
``only if'' part of the theorem is valid.

Coversely, if there exists $\lambda \ge 0$ such that $a(z) \ge 0$ for any $z$ from the formulation of the theorem, then
by Theorem~\ref{Thrm_GlobOptCond} the point $x_*$ is a global minimizer of $F_{\lambda}$. Hence taking into account the
facts that $x_*$ is a feasible point of $(\mathcal{P}_{IJ})$ and $F_{\lambda}(x) = f_0(x)$ for any feasible point $x$
of this problem one gets that $x_*$ is a globally optimal solution of the problem $(\mathcal{P}_{IJ})$. Thus, the
``if'' part of the theorem is valid as well.	
\end{proof}

Let us consider two simple examples illustrating Theorems~\ref{Thrm_GlobalExactPenFunc_DCProblem} and
\ref{Theorem_GlobOptCond_BothTypesConstr}. The first example allows one to compare exact penalty approach
with ``interior point'' approach from Theorem~\ref{Theorem_GlobOptCond_InequalConst}, while in the second example we
analyse an equality constrained problem.

\begin{example}
Let us consider the same problem as in Example~\ref{Example_InequalConstrProblem}, i.e. the problem
\begin{equation} \label{ExProblem_InequalConstr_2}
  \min_{x \in \mathbb{R}} f_0(x) = |x - 4| \quad \text{subject to} \quad
  f_1(x) = \min\{ |x - 2|, |x + 2| \} - 1 \le 0.
\end{equation}
In this case the merit function $F_{\lambda}$ has the form
$$
  F_{\lambda}(x) = |x - 4| + \lambda \max\big\{ 0, \min\{ |x - 2|, |x + 2| \} - 1 \big\}.
$$
It is easily seen that this function is globally exact, and its least exact penalty parameter is equal to $1$. We set
$\lambda = 2$. Furthermore, one can check that for any $\lambda \ge 1$ the point $x_0 = -1$ is a local minimizer of
$F_{\lambda}$, i.e. $F_{\lambda}$ is locally exact at $x_0$. Let us apply the global optimality conditions from
Theorem~\ref{Thrm_GlobOptCond} to the function $F_{\lambda}$ at the point $x_0$, as it is done in
Theorem~\ref{Theorem_GlobOptCond_BothTypesConstr}.

Let, as above, $\varphi(x) = \max\{ 0, f_1(x_0) \}$. Applying Proposition~\ref{Prp_GlobalCodiffCalc} one gets 
\begin{align*}
  D \varphi(x_0) &= \Big[ \co\big\{ - \overline{d} f_1(x_0), \underline{d} f_1(x_0) \big\}, 
  \overline{d} f_1(x_0) \Big], \\
  D F_2(x_0) &= \Big[ \underline{d} f_0(x_0) + 2 \underline{d} \varphi(x_0),
  \overline{d} f_0(x_0) + 2 \overline{d} \varphi(x_0) \Big]
\end{align*}
Recall that global codifferentials of the functions $f_0$ and $f_1$ at $x_0$ were computed in
Example~\ref{Example_InequalConstrProblem}. Therefore, with the use of Example~\ref{Example_InequalConstrProblem} one
gets that
\begin{multline*}
  \underline{d} F_2(x_0) = \co\Bigg\{ \begin{pmatrix} -22 \\ 5 \end{pmatrix}, \begin{pmatrix} -26 \\ 1 \end{pmatrix},
  \begin{pmatrix} -10 \\ 1 \end{pmatrix}, \begin{pmatrix} -14 \\ -3 \end{pmatrix}, 
  \begin{pmatrix} -14 \\ 3 \end{pmatrix}, \begin{pmatrix} - 18 \\ -1 \end{pmatrix}, 
  \begin{pmatrix} -22 \\ 3 \end{pmatrix}, \\
  \begin{pmatrix} -10 \\ -1 \end{pmatrix},
  \begin{pmatrix} -12 \\ 3 \end{pmatrix}, \begin{pmatrix} -16 \\ -1 \end{pmatrix}, 
  \begin{pmatrix} 0 \\ -1 \end{pmatrix}, \begin{pmatrix} -4 \\ -5 \end{pmatrix}, 
  \begin{pmatrix} -4 \\ 1 \end{pmatrix}, \begin{pmatrix} -8 \\ -3 \end{pmatrix},
  \begin{pmatrix} -12 \\ 1 \end{pmatrix}, \begin{pmatrix} 0 \\ -3 \end{pmatrix}
  \Bigg\},
\end{multline*}
and
$$
  \overline{d} F_2(x_0) = \co\left\{ \begin{pmatrix} 4 \\ -2 \end{pmatrix}, \begin{pmatrix} 8 \\ 2 \end{pmatrix},
  \begin{pmatrix} 12 \\ -2 \end{pmatrix}, \begin{pmatrix} 0 \\ 2 \end{pmatrix} \right\}.
$$
Let $C$ be the set of extreme points of $\overline{d} F_2(0)$. Then one can check that
\begin{enumerate}
\item{$0 \in \underline{d} F_0(x_0) + z$ for $z = (8, 2) \in C$, $z = (12, -2) \in C$, and $z = (0, 2) \in C$;
}

\item{$(a(z), v(z)) = (- 4 / 17, - 16 / 17)$ for $z = (4, -2) \in C$.
}
\end{enumerate}
Thus, by Theorem~\ref{Thrm_GlobOptCond} the point $x_0$ is not a point of global minimum of the function
$F_2(x)$ and, therefore, is not a globally optimal solution of problem \eqref{ExProblem_InequalConstr_2}. However, note
that for $z = (4, -2)$ one has $x_1 = x_0 + a(z)^{-1} v(z) = -3$, i.e. $x_1$ is a globally
optimal solution of problem \eqref{ExProblem_InequalConstr_2} (cf. Example~\ref{Example_InequalConstrProblem}).
\end{example}

\begin{example}
Let $\mathcal{H} = \mathbb{R}^2$. Consider the following optimization problem:
\begin{equation} \label{ExProblem_EqConst}
  \min_{x \in \mathbb{R}^2} f_0(x) = |x_1 - 2| + 2 |x_2| \quad 
  \text{subject to} \quad f_1(x) = |x_1| - |x_2| = 0.
\end{equation}
The merit function $F_{\lambda}$ for this problem has the form
$$
  F_{\lambda}(x) = |x_1 - 2| + 2 |x_2| + \lambda \big| |x_1| - |x_2| \big|.
$$
It is easily seen that that the penalty term $\varphi(x) = | |x_1| - |x_2| |$ has a local error bound at the unique
globally optimal solution $x_* = (0, 0)$ of problem \eqref{ExProblem_EqConst}. Consequently, taking into account 
the fact that $f_0(x) \to + \infty$ as $\| x \| \to + \infty$ one obtains that the function $F_{\lambda}$ is globally
exact for problem \eqref{ExProblem_EqConst}. Let us estimate the least exact penalty parameter of $F_{\lambda}$.

One can easily verify that the function $f_0$ is globally Lipschitz continuous with Lipschitz constant $L = \sqrt{5}$,
and 
$$
  \varphi^{\downarrow}(x) = \liminf_{y \to x} \frac{\varphi(y) - \varphi(x)}{\| y - x \|} \le - 1
  \quad \forall x \notin \Omega,
$$
where $\| \cdot \|$ is the Euclidean norm. The quantity $\varphi^{\downarrow}(x)$ is called \textit{the rate of
steepest descent} of $\varphi$ at $x$ (see, e.g.~\cite{Demyanov_2000,Dolgopolik_ExactPenFunc_II}). For any 
$\lambda > \sqrt{5}$ and $x \notin \Omega$ one has 
$F_{\lambda}^{\downarrow}(x) \le L + \lambda \varphi^{\downarrow}(x) < 0$. Therefore, local/global minimizers of the
function $F_{\lambda}$ do not belong to the set $\mathbb{R}^2 \setminus \Omega$ for any $\lambda > \sqrt{5}$, since
$F^{\downarrow}_{\lambda}(x) \ge 0$ is a necessary optimality condition. Thus, one can conclude that the least
exact penalty parameter of $F_{\lambda}$ does not exceed $\sqrt{5}$. That is why we set $\lambda = 3$.

Let us apply the global optimality conditions from Theorem~\ref{Thrm_GlobOptCond} to the function $F_{\lambda}$
at the point $x_0 = (2, 0)$, which is infeasible for problem \eqref{ExProblem_EqConst} and is a point of unconstrained
global minimum of the objective function $f_0$. With the use of Proposition~\ref{Prp_GlobalCodiffCalc} one obtains that
\begin{gather*}
  \underline{d} f_0(x_0) = \co\left\{ \begin{pmatrix} 0 \\ 1 \\ 2 \end{pmatrix}, 
  \begin{pmatrix} 0 \\ 1 \\ -2 \end{pmatrix}, \begin{pmatrix} 0 \\ -1 \\ 2 \end{pmatrix},
  \begin{pmatrix} 0 \\ -1 \\ -2 \end{pmatrix} \right\}, \quad \overline{d} f_0(x_0) = \{ 0 \}, \\
  \underline{d} f_1(x_0) = \co\left\{ \begin{pmatrix} 0 \\ 1 \\ 0 \end{pmatrix},
  \begin{pmatrix} -4 \\ -1 \\ 0 \end{pmatrix} \right\}, \quad
  \overline{d} f_1(x_0) = \co\left\{ \begin{pmatrix} 0 \\ 0 \\ 1 \end{pmatrix},
  \begin{pmatrix} 0 \\ 0 \\ -1 \end{pmatrix}
  \right\}.
\end{gather*}
Furthermore, one has $D F_3(x_0) = [ \underline{d} f_0(x_0) + 3 \underline{d} \varphi(x_0), 
  \overline{d} f_0(x_0) + 3 \overline{d} \varphi(x_0) ]$, where
$$
  \underline{d} \varphi(x_0) = \co\left\{\underline{d} f_1(x_0) + \underline{d} f_1(x_0), 
   \begin{pmatrix} -4 \\ 0 \\ 0 \end{pmatrix} - \overline{d} f_1(x_0) - \overline{d} f_1(x_0) \right\},
$$
and $\overline{d} \varphi(x_0) = \overline{d} f_1(x_0) - \underline{d} f_1(x_0)$. Utilising these expressions for
global codifferentials one can easily compute $\underline{d} F_3(x_0)$, which is the convex hull of $20$ points and we
do not present it here for the sake of shortness, and check that
$$
  \overline{d} F_3(x_0) = \co\left\{ \begin{pmatrix} 0 \\ -3 \\ 3 \end{pmatrix},
  \begin{pmatrix} 12 \\ 3 \\ 3 \end{pmatrix}, \begin{pmatrix} 0 \\ -3 \\ -3 \end{pmatrix},
  \begin{pmatrix} 12 \\ 3 \\ -3 \end{pmatrix} \right\}.
$$
Let $C$ be the set of extreme points of $\overline{d} F_3(x_0)$. Then solving the problem
$$
  \min_{(a, v) \in \mathbb{R} \times \mathbb{R}^2} \| (a, v) \|^2 \quad 
  \text{subject to} \quad (a, v) \in \underline{d} F_3(x_0) + z
$$
one can check that
\begin{enumerate}
\item{$(a(z), v(z)) = (-1, 1.5, 0.5)$ for $z = (0, -3, 3) \in C$;
}

\item{$(a(z), v(z)) = (-0.8, 1.6, 0)$ for $z = (12, 3, 3) \in C$ and $z = (12, 3, -3) \in C$;
}

\item{$(a(z), v(z)) = (-1, 1.5, -0.5)$ for $z = (0, -3, -3) \in C$.
}
\end{enumerate}
Thus, the global optimality conditions from Theorem~\ref{Thrm_GlobOptCond} are not satisfeid at $x_0$. Moreover,
observe that for $z = (12, 3, 3) \in C$ and $z = (12, 3, -3) \in C$ one has 
$x_1 = x_0 + a(z)^{-1} v(z) = (0, 0)$, and $x_1$ is a globally optimal solution of \eqref{ExProblem_EqConst}.
\end{example}

As was noted above, in many particular cases the global optimality conditions from Theorems~\ref{Thrm_GlobOptCond},
\ref{Theorem_GlobOptCond_InequalConst}, and \ref{Theorem_GlobOptCond_BothTypesConstr} are of theoretical value only,
since it is extremely difficult (if at all possible) to compute a global codifferential of a general DC function and
verify the global optimality conditions. Apparently, our optimality conditions can be readily checked only in 
the piecewise affine case, i.e. when the global codifferential is a pair of convex polytopes (see
Example~\ref{Example_SupposrtSet_PolyhedralFunc}). Nevertheless, it seems possible to design new numerical methods for
DC optimization problems based on the global optimality conditions obtained in this article and utilising certain
polyhedral approximations of global codifferentials (cf. codifferential method in \cite{BagUgon}, aggregate
codifferential method in \cite{TorBagKar}, and a method based on successive piecewise-affine approximations in
\cite{GaudiosoBagirov}). A design and analysis of such numerical methods lie outside the scope of
this article. Here we only present a simple example of the usage of approximations of global codifferentials without
trying to outline the idea behind possible numerical methods.

Let $f$ be a DC function, and $D f$ be its global codifferential associated with a DC decomposition $f = g - h$. From
Proposition~\ref{Theorem_AffSupport_vs_Conjugate} and the definition of global codifferential
\eqref{GlobalHypodiffDefViaSupportSet} it follows that for any $x, y \in \mathcal{H}$, 
$v \in \partial g(y)$, and $w \in \partial h(y)$ one has 
$(g(y) - g(x) + \langle v, x - y \rangle, v) \in \underline{d} f(x)$ and 
$(h(x) - h(y) + \langle w, y - x \rangle, -w) \in \overline{d} f(x)$. Therefore, if a point $x$ is fixed, one can
choose sampling points $x_1, \ldots, x_m \in \mathcal{H}$, compute $v_k \in \partial g(x_k)$ and
$w_k \in \partial h(x_k)$, $k \in \{ 1, \ldots, m \}$, and consider the following inner approximations of the global
hypodifferential and the global hyperdifferential at $x$ respectively:
\begin{equation} \label{InnerApprox_GlobalCodiff}
\begin{split}
  \co\big\{ (g(x_k) - g(x) + \langle v_k, x - x_k \rangle, v_k) \bigm| k \in \{ 1, \ldots, m \} \big\} 
  &\subset \underline{d} f(x), \\
  \co\big\{ (h(x) - h(x_k) + \langle w, x_k - x \rangle, -w_k) \bigm| k \in \{ 1, \ldots, m \} \big\} 
  &\subset \overline{d} f(x).
\end{split}
\end{equation}
The following example demonstrates that even if these inner approximations are very crude, one can still utilise
them along with the global optimality conditions from Theorems~\ref{Thrm_GlobOptCond},
\ref{Theorem_GlobOptCond_InequalConst}, and \ref{Theorem_GlobOptCond_BothTypesConstr} to escape from a local minimum
(or stationary point). 

\begin{example} \label{Example_NonPiecewiseAff_1}
Let $\mathcal{H} = \mathbb{R}^2$. Consider the following DC optimization problem:
\begin{equation} \label{NonconvexQuadraticProblem}
  \min_{x \in \mathbb{R}^2} f_0(x) = x_1^2 - x_2^2 \quad 
  \text{subject to} \quad -1 \le x_2 \le 2.
\end{equation}
We rewrite the constraints of this problem as follows: $f_1(x) = x_2 - 2 \le 0$ and $f_2(x) = -x_2 - 1 \le 0$. Let 
$x_0 = (0, -1)$. Clearly, $x_0$ is a locally (but not globally) optimal solution of problem
\eqref{NonconvexQuadraticProblem}. We would like to escape from this local minimum. 

The merit function $F_{\lambda}(x) = f_0(x) + \lambda( \max\{0, x_2 - 2\} + \max\{ 0, - x_2 - 1 \})$ for 
problem \eqref{NonconvexQuadraticProblem} is not bounded below for any $\lambda \ge 0$. Therefore, we will use the
``interior point'' approach of Theorem~\ref{Theorem_GlobOptCond_InequalConst}. With the use of
Example~\ref{Example_SupportSetComputation} one obtains that the global codifferential 
$D f = [\underline{d} f, \overline{d} f]$ of the function $f_0$ associated with the DC decomposition $f_0 = g_0 - h_0$,
where $g_0(x) = x_1^2$ and $h_0(x) = x_2^2$, has the form:
$$
  \underline{d} f_0(x) = \co\left\{ \begin{pmatrix} - (x_1 - y_1)^2 \\ 2 y_1 \\ 0 \end{pmatrix},
  y_1 \in \mathbb{R} \right\}, 
  \overline{d} f_0(x) = \co\left\{ \begin{pmatrix} (x_2 - y_2)^2 \\ 0 \\ - 2 y_2 \end{pmatrix},
  y_2 \in \mathbb{R} \right\}.
$$
A direct usage of this global codifferential leads to rather cumbersome and complicated computations. That is why we
will use inner approximations \eqref{InnerApprox_GlobalCodiff} instead. As sampling points we choose five points: $x_0$
and $x_0 + \xi_{ij}$, $i, j \in \{ 1, 2 \}$, where $\xi_{ij} = ( (-1)^i 2, (-1)^j 2)$ are the extereme points of the
ball of radius $2$ in the $\ell_{\infty}$ norm. Applying \eqref{InnerApprox_GlobalCodiff} one gets the following inner
approximations:
$$
  \co\left\{ \begin{pmatrix} 0 \\ 0 \\ 0 \end{pmatrix},
  \begin{pmatrix} -4 \\ -4 \\ 0 \end{pmatrix}, \begin{pmatrix} -4 \\ 4 \\0 \end{pmatrix} \right\} \subset 
  \underline{d} f_0(x_0), \:
  \co\left\{ \begin{pmatrix} 0 \\ 0 \\ 2 \end{pmatrix},
  \begin{pmatrix} 4 \\ 0 \\ -2 \end{pmatrix}, \begin{pmatrix} 4 \\ 0 \\ 6 \end{pmatrix} \right\} \subset 
  \overline{d} f_0(x_0).
$$
Note also that $D f_1(x_0) = [ \{(0, 0, 1) \}, \{ 0 \} ]$ and $D f_2(x_0) = [ \{(0, 0, -1) \}, \{ 0 \} ]$.

Let us apply the global optimality conditions from Theorem~\ref{Theorem_GlobOptCond_InequalConst}. In our case these
conditions take the form: for any $z \in \overline{d} f_0(x_0)$ one has $a(z) \ge 0$, where $(a(z), v(z))$ is a globally
optimal solution of the problem
$$
  \min_{(a, v) \in \mathbb{R} \times \mathbb{R}^2} \| (a, v) \|^2 \quad 
  \text{s.t.} \quad (a, v) \in 
  L(z) = \cl\co\big\{ \underline{d} f_0(x_0) + z, \underline{d} f_1(x_0), \underline{d} f_2(x_0) \big\}.
$$
Replacing $\underline{d} f_0(x_0)$ with its inner approximation computed above one obtains that
$(a(z), v(z)) \approx (-0.1034, 0, -0.2414)$ for $z = (4, 0, -2) \in \overline{d} f_0(x_0)$, i.e. the optimality
conditions are not satisfied for the approximation. Following Remark~\ref{Remark_InteriorPointStep} define
$\overline{x} = x_0 + (a(z))^{-1} v(z) \approx (0, 1.333)$. Note that $\overline{x}$ belongs to the interior of the
feasible region, and $f(\overline{x}) \approx -1.777 < -1 = f(x_0)$. Thus, the use of inner approximations
\eqref{InnerApprox_GlobalCodiff} helped us escape from the local minimum.
\end{example}

\section{Some connections between optimality conditions}
\label{Sect_OtherOptimalityCond}

Let us point out some connections between global optimality conditions obtained in the previous section, well-known
global optimality conditions in terms of $\varepsilon$-subdifferentials, and KKT optimality conditions. 

We start with global optimality conditions in terms of $\varepsilon$-subdifferentials. Let us consider the unconstrained
DC optimization problem
$$
  \min_{x \in \mathcal{H}} f(x) = g(x) - h(x),	\eqno{(\mathcal{P}_0)}
$$
where $g$ and $h$ are finite closed convex functions. Suppose that $f$ is bounded below. Recall that $x_*$ is a point
of global minimum of the function $f$ if and only if 
$$
  \partial_{\varepsilon} h(x_*) \subseteq \partial_{\varepsilon} g(x_*) \quad \forall \varepsilon \ge 0
$$
(see \cite{HiriartUrruty_GlobalDC}). On the other hand, by Theorem~\ref{Thrm_GlobOptCond} the point $x_*$ is a globally
optimal solution of the problem $(\mathcal{P}_0)$ if and only if for any $z \in \overline{d} f(x_*)$ one has $a(z) \ge
0$, where $(a(z), v(z))$ is a globally optimal solution of the problem
$$
  \min_{(a, v) \in \mathbb{R} \times \mathcal{H}} \| (a, v) \|^2 \quad
  \text{subject to} \quad (a, v) \in \underline{d} f(x_*) + z,
$$
and $D f$ is a global codifferential of $f$ associated with the DC decomposition $f = g - h$. Let us point out a
direct connection between these optimality conditions.

\begin{theorem}
Let $x_* \in \mathcal{H}$ and $\varepsilon_0 \ge 0$ be given. Then 
$\partial_{\varepsilon} h(x_*) \subseteq \partial_{\varepsilon} g(x_*)$ for any $\varepsilon \le \varepsilon_0$ if and
only if for any $z = (b, w) \in \overline{d} f(x_*)$ with $b \le \varepsilon_0$ one has $a(z) \ge 0$.
\end{theorem}

\begin{proof}
Fix any $z = (b, w) \in \overline{d} f(x_*)$ such that $b \le \varepsilon$ for some $\varepsilon \ge 0$. By the last
part of Proposition~\ref{Thrm_MinViaGlobalHypodiff} and the definition of global codifferential
\eqref{GlobalHypodiffDefViaSupportSet} (see also~\eqref{DC_ConvexPartViaGlobalCodiff}) one obtains that $a(z) \ge 0$ if
and only if
\begin{equation} \label{b-Subdiff_Def}
  g(x) - g(x_*) + b + \langle w, x - x_* \rangle \ge 0 \quad \forall x \in \mathcal{H}.
\end{equation}
Hence with the use of the inequality $b \le \varepsilon$ one gets that $- w \in \partial_{\varepsilon} g(x_*)$. Observe
also that 
$$
  \partial_{\varepsilon} h(x_*) = \{ w \in \mathcal{H} \mid 
  \exists b \in [0, \varepsilon] \colon (b, - w) \in \overline{d} f(x_*) \}
$$
by Proposition~\ref{Thrm_SubdiffViaSupportSet} and the definition of global codifferential. 

Suppose that $a(z) \ge 0$ for any $z = (b, w) \in \overline{d} f(x_*)$ with $b \le \varepsilon_0$, and fix any 
$\varepsilon \le \varepsilon_0$ and $w \in \partial_{\varepsilon} h(x_*)$. Then $z = (b, - w) \in \overline{d} f(x_*)$
for some $b \le \varepsilon$, which implies that $a(z) \ge 0$ and $w \in \partial_{\varepsilon} g(x_*)$. Thus,
$\partial_{\varepsilon} h(x_*) \subseteq \partial_{\varepsilon} g(x_*)$ for any $\varepsilon \le \varepsilon_0$,

Conversely, suppose that $\partial_{\varepsilon} h(x_*) \subseteq \partial_{\varepsilon} g(x_*)$ for any 
$\varepsilon \le \varepsilon_0$. Choose any $z = (b, w) \in \overline{d} f(x_*)$ with $b \le \varepsilon_0$. Then
$- w \in \partial_b h(x_*)$, which implies that \eqref{b-Subdiff_Def} holds true. Hence $a(z) \ge 0$ by
Proposition~\ref{Thrm_MinViaGlobalHypodiff}, and the proof is complete.	
\end{proof}

Let us also consider the reverse convex minimization problem of the form:
$$
  \min_{x \in \mathcal{H}} f_0(x) \quad \text{subject to} \quad g(x) \ge 0. \eqno{(\mathcal{P}_1)}
$$
Here $f_0$ and $g$ are finite closed convex functions. We suppose that the feasible region of this problem is
nonempty, and there exists an infeasible point $x_0$ such that 
\begin{equation} \label{ReverseConvexProblem_SlaterCond}
  f_0(x_0) < \inf\{ f_0(x) \mid x \in \mathcal{H} \colon g(x) \ge 0 \},
\end{equation}
i.e. the optimal value of the problem $(\mathcal{P}_1)$ is strictly grater than the infimum of $f_0$ over the entire
space $\mathcal{H}$. Note that if $x_* \in \mathcal{H}$ is a globally optimal solution of $(\mathcal{P}_1)$ and 
$g(x) > 0$, then $x_*$ is obviously a point of unconstrained local minimum of the function $f_0$, which due to the
convexity of $f_0$ implies that $x_*$ is a global minimizer of $f_0$ and $0 \in \partial f_0(x_*)$. Therefore, below we
suppose that all global minimizers $x_*$ of $(\mathcal{P}_1)$ satisfy the equality $g(x_*) = 0$.

Recall that a feasible point $x_*$ is a globally optimal solution of $(\mathcal{P}_1)$ if and only if
$\partial_{\varepsilon} g(x_*) \subset \bigcup_{\alpha \ge 0} \partial_{\varepsilon} (\alpha f_0)(x_*)$ 
for all $\varepsilon \ge 0$ by \cite[Theorem~3.5 and Remark~3.7]{HiriartUrruty_98}. On the other hand, rewriting the
constraint $g(x) \ge 0$ as $f_1(x) \le 0$ with $f_1(x) = - g(x)$ one can apply global optimality conditions from
Theorem~\ref{Theorem_GlobOptCond_InequalConst} to this problem. Let $D f_0$ be a global codifferential of $f_0$
associated with the DC decomposition $f_0 = f_0 - 0$, while $D f_1$ be a global codifferential of $f_1$ associated
with the DC decomposition $f_1 = 0 - g$. Clearly, $\overline{d} f_0(\cdot) \equiv \{ 0 \}$ and 
$\underline{d} f_1(\cdot) \equiv \{ 0 \}$. Consequently, under the assumptions of
Theorem~\ref{Theorem_GlobOptCond_InequalConst} a feasible point $x_*$ is a globally optimal solution of
$(\mathcal{P}_1)$ if and only if for any $z \in \overline{d} f_1(x_*)$ one has $a(z) \ge 0$, where
$(a(z), v(z))$ is a globally optimal solution of the problem
$$
  \min_{(a, v) \in \mathbb{R} \times \mathcal{H}} \| (a, v) \|^2 \quad
  \text{subject to} \quad (a, v) \in \cl\co\{\underline{d} f_0(x_*), z \}.
$$
Let us describe how these two optimality conditions are connected.

\begin{theorem} \label{Thrm_ReverseConvex_vs_Codifferential}
Let a feasible point $x_*$ of $(\mathcal{P}_1)$ and $\varepsilon_0 \ge 0$ be given. Then 
$\partial_{\varepsilon} g(x_*) \subset \bigcup_{\alpha \ge 0} \partial_{\varepsilon} (\alpha f_0)(x_*)$ for any 
$\varepsilon \le \varepsilon_0$ if and only if for any $z = (b, w) \in \overline{d} f_1(x_*)$ with 
$b \le \varepsilon_0$ one has $a(z) \ge 0$.
\end{theorem}

\begin{proof}
Fix any $z = (b, w) \in \overline{d} f_1(x_*)$. By the last part of Proposition~\ref{Thrm_MinViaGlobalHypodiff} and 
the definition of global codifferential (see \eqref{GlobalHypodiffDefViaSupportSet} and
\eqref{DC_ConvexPartViaGlobalCodiff}) one obtains that $a(z) \ge 0$ if and only if
\begin{equation} \label{EpsNormalSet_ReverseConvex}
  \max\big\{ f_0(x) - f_0(x_*), b + \langle w, x - x_* \rangle \big\} \ge 0 \quad \forall x \in \mathcal{H}.
\end{equation}
Define $C_0 = \{ x \in \mathcal{H} \mid f_0(x) < f_0(x_*) \}$ and 
$C = \{ x \in \mathcal{H} \mid f_0(x) \le f_0(x_*) \}$. From \eqref{ReverseConvexProblem_SlaterCond} it follows that
$C_0$ is nonempty, while by \cite[Proposition~VI.1.3.3]{Lemarechal} one has $C = \cl C_0$. Clearly, inequality
\eqref{EpsNormalSet_ReverseConvex} is satisfied iff $b + \langle w, x - x_* \rangle \ge 0$ for any
$x \in C_0$. In turn, this inequality is satisfied iff $\langle w, x - x_* \rangle \ge - b$ for all $x \in C$ due
to the fact that $C = \cl C_0$. Thus, $a(z) \ge 0$ for some $z = (b, w) \in \overline{d} f_1(x_*)$ iff 
$\langle -w, x - x_* \rangle \le b$ for all $x \in C$ or equivalently $- w \in N_b(C, x_*)$, where $N_b(C, x_*)$ is the
set of $b$-normal directions to the set $C$ at $x_*$ (see, e.g. \cite[Definition~XI.1.1.3]{LemarechalII}). Note also
that by Proposition~\ref{Thrm_SubdiffViaSupportSet} and the definition of global codifferential
\eqref{GlobalHypodiffDefViaSupportSet} for any $\varepsilon \ge 0$ one has 
$\partial_{\varepsilon} g(x_*) = \{ w \in \mathcal{H} \mid 
\exists b \in [0, \varepsilon] \colon (b, - w) \in \overline{d} f_1(x_*) \}$.

Suppose that for any $z = (b, w) \in \overline{d} f_1(x_*)$ with $b \le \varepsilon_0$ one has $a(z) \ge 0$, and fix
any $\varepsilon \le \varepsilon_0$ and $w \in \partial_{\varepsilon} g(x_*)$. Then 
$z = (b, - w) \in \overline{d} f(x_*)$ for some $b \le \varepsilon$, which implies that $a(z) \ge 0$ and 
$w \in N_b(C, x_*) \subseteq N_{\varepsilon}(C, x_*)$. Applying \cite[Corollary~XI.3.6.2]{LemarechalII} one gets that
$N_{\varepsilon}(C, x_*) = \bigcup_{\alpha \ge 0} \partial_{\varepsilon} (\alpha f_0)(x_*)$. Thus,
for any $\varepsilon \le \varepsilon_0$ one has 
$\partial_{\varepsilon} g(x_*) \subset \bigcup_{\alpha \ge 0} \partial_{\varepsilon} f_0(x_*)$.

Conversely, suppose that 
$\partial_{\varepsilon} g(x_*) \subset \bigcup_{\alpha \ge 0} \partial_{\varepsilon} (\alpha f_0)(x_*)$
for any $\varepsilon \le \varepsilon_0$. Choose any $z = (b, w) \in \overline{d} f_1(x_*)$ with $b \le \varepsilon_0$.
Then $- w \in \partial_b g(x_*) \subset \bigcup_{\alpha \ge 0} \partial_b (\alpha f_0)(x_*) = N_b(C, x_*)$, which, as we
proved above, is equivalent to the inequality $a(z) \ge 0$.	
\end{proof}

The theorem above can be further extended to the case of convex maximization problems of the form:
$$
  \max_{x \in \mathcal{H}} f(x) \quad \text{subject to} \quad x \in C.
  \eqno{(\mathcal{P}_{\max})}
$$
Here $C = \{ x \in \mathcal{H} \mid f_i(x) \le 0, \: i \in I = \{ 1, \ldots, l \} \}$, and $f$ and $f_i$, $i \in I$, are
finite closed convex functions. We suppose that Slater's condition holds true, and the infimum of $f$ over $C$ is
strictly smaller than the maximum. 

Recall that a feasible point $x_*$ is a globally optimal solution of $(\mathcal{P}_{\max})$ if and only if 
$\partial_{\varepsilon} f(x_*) \subset N_{\varepsilon}(C, x_*)$ for all $\varepsilon \ge 0$ by
\cite[Proposition~3.9]{HiriartUrruty_GlobalDC}. Recasting the problem
$(\mathcal{P}_{\max})$ as the problem of minimizing the function $f_0(x) = - f(x)$ over $C$ one can apply global
optimality conditions from Theorem~\ref{Theorem_GlobOptCond_InequalConst} to this problem. Let $D f_0$ be a global
codifferential of $f_0$ associated with the DC decomposition $f_0 = 0 - f$, while 
$D f_i$ be a global codifferential of $f_i$ associated with the DC decomposition $f_i = f_i - 0$, $i \in I$. Then
$\underline{d} f_0(\cdot) \equiv \{ 0 \}$ and 
$\overline{d} f_i(\cdot) \equiv \{ 0 \}$, $i \in I$. Therefore, by Theorem~\ref{Theorem_GlobOptCond_InequalConst} 
a feasible point $x_*$ is a globally optimal solution of $(\mathcal{P}_{\max})$ if and only if for any 
$z = (b, w) \in \overline{d} f_0(x_*)$ one has $a(z) \ge 0$, where $(a(z), v(z))$ is a globally optimal solution of the
problem
$$
  \min_{(a, v) \in \mathbb{R} \times \mathcal{H}} \| (a, v) \|^2 \quad
  \text{subject to} \quad (a, v) \in \cl\co\big\{ z, \underline{d} f_i(x_*) + (f_i(x_*), 0) \mid i \in I \big\}.
$$
From Proposition~\ref{Thrm_MinViaGlobalHypodiff} and the definition of global codifferential
\eqref{GlobalHypodiffDefViaSupportSet} it follows that $a(z) \ge 0$ iff
$$
  \max\big\{ b + \langle w, x - x_* \rangle, f_1(x), \ldots, f_l(x) \big\} \ge 0 \quad \forall x \in \mathcal{H}.
$$
In turn, this inequality is satisfied iff $- w \in N_b(C, x_*)$. Utilising this result and arguing in the same way as in
the proof of Theorem~\ref{Thrm_ReverseConvex_vs_Codifferential} one can easily check that the following connection
between the two global optimality conditions for the problem $(\mathcal{P}_{\max})$ exists.

\begin{theorem}
Let a feasible point $x_*$ of $(\mathcal{P}_{\max})$ and $\varepsilon_0 \ge 0$ be given. Then 
$\partial_{\varepsilon} f(x_*) \subset N_{\varepsilon}(C, x_*)$ for any $\varepsilon \le \varepsilon_0$ if and only if
for any $z = (b, w) \in \overline{d} f_0(x_*)$ with $b \le \varepsilon_0$ one has $a(z) \ge 0$.
\end{theorem}

Thus, one can say that there is an intimate relation between global optimality conditions for DC optimization problems
in terms of global codifferentials and in terms of $\varepsilon$-subdifferentials.

Now we turn to KKT optimality conditions. For the sake of simplicity, let us consider the inequality constrained problem
$$
  \min_{x \in \mathcal{H}} f_0(x) \quad \text{subject to} \quad f_i(x) \le 0, \quad i \in I = \{ 1, \ldots, l \},
  \eqno{(\mathcal{P}_I)}
$$
where $f_i = g_i - h_i$ are DC functions such that the convex functions $g_i$ and $h_i$ are differentiable, 
$i \in I \cup \{ 0 \}$. Let $D f_i$ be a global codifferential of $f_i$ associated with the DC decomposition 
$f_i = g_i - h_i$, $i \in I \cup \{ 0 \}$. Recall that under the assumptions of 
Theorem~\ref{Theorem_GlobOptCond_InequalConst} a feasible point $x_*$ is a globally optimal solution of the problem
$(\mathcal{P}_I)$ if and only if for any $z_i \in \overline{d} f_i(x_*)$, $i \in I \cup \{ 0 \}$, one has $a(z) \ge 0$,
where $(a(z), v(z))$ with $z = (z_0, z_1, \ldots, z_l)$ is a globally optimal solution of the problem
$$
  \min_{(a, v) \in \mathbb{R} \times \mathcal{H}} \| (a, v) \|^2 \quad 
  \text{subject to} \quad (a, v) \in L(z)
$$
with 
\begin{equation} \label{TrickPenaltyInequal_again}
  L(z) = \cl\co\{ \underline{d} f_0(x_*) + z_0, \: \underline{d} f_i(x_*) + z_i + (f_i(x_*), 0) \mid i \in I \}.
\end{equation}
(in the case $a(z) = 0$ one actually has $0 \in L(z)$; see Proposition~\ref{Thrm_MinViaGlobalHypodiff} and
Remark~\ref{Remark_MainThrm_GlobSuppSet}). Let us show that in the case when 
$z_i = (b_i, w_i) \in \overline{d} f_i(x_*)$ are such that $b_i = 0$, these optimality conditions are closely connected
to the KKT optimality conditions.

\begin{theorem}
Let $x_*$ be a feasible point of the problem $(\mathcal{P}_I)$, the function $f_0$ be bounded below on the feasible
region of this problem, and let MFCQ hold at $x_*$, i.e. there exists $y \in \mathcal{H}$ for which 
$\langle \nabla f_i(x_*), y \rangle < 0$ for any $i \in I$ such that $f_i(x_*) = 0$. Then KKT optimality conditions hold
true at $x_*$ if and only if for any $z_i = (b_i, w_i) \in \overline{d} f_i(x_*)$ with $b_i = 0$, 
$i \in I \cup \{ 0 \}$, one has $0 \in L(z)$, where $z = (z_0, z_1, \ldots, z_l)$.
\end{theorem}

\begin{proof}
Observe that $z_i = (0, w_i) \in \overline{d} f_i(x_*)$, $i \in I \cup \{ 0 \}$, if and only if 
$w_i = - \nabla h_i(x_*)$ by Proposition~\ref{Thrm_SubdiffViaSupportSet} and the definition of global codifferential
\eqref{GlobalHypodiffDefViaSupportSet}. Therefore, let $z = (z_0, z_1, \ldots, z_l)$ with
$z_i = (0, - \nabla h_i(x_*)) \in \overline{d} f_i(x_*)$ for all $i \in I \cup \{ 0 \}$.

From \eqref{TrickPenaltyInequal_again} and \eqref{DC_ConvexPartViaGlobalCodiff} it follows that the set $L(z)$ is an
affine support set of the function
\begin{align*}
  F(x) = \max_{i \in I} \big\{ &g_0(x) - g_0(x_*) - \langle \nabla h_0(x_*), x - x_* \rangle, \\
  &g_i(x) - g_i(x_*) + f_i(x_*) - \langle \nabla h_i(x_*), x - x_* \rangle \big\}.
\end{align*}
Clearly, $F(x_*) = 0$. Moreover, from the convexity of the functions $h_i$ it follows that
$h_i(x) - h_i(x_*) \ge \langle \nabla h_i(x_*), x - x_* \rangle$, which implies that for all $x \in \mathcal{H}$ the
inequality $F(x) \ge \max_{i \in I}\{ f_0(x) - f_0(x_*), f_i(x) \}$ holds true. Consequently, for any feasible point
$x$ one has $F(x) \ge f_0(x) - f_0(x^*)$, and $F(x) > 0$ otherwise, i.e. the function $F$ is bounded
below, since $f_0$ is bounded below on the feasible region by our assumption. Therefore, by the last part of
Proposition~\ref{Thrm_MinViaGlobalHypodiff} one gets that $0 \in L(z)$ if and only
if $F(x) \ge 0$ for all $x \in \mathcal{H}$. Hence taking into account the facts that $F$ is a convex function and
$F(x_*) = 0$ one obtains that $0 \in L(z)$ iff $0 \in \partial F(x_*)$. 

Bearing in mind the fact that $x_*$ is a feasible point of the problem $(\mathcal{P}_I)$, and
applying the well-known formula for the subdifferential of the maximum of a finite family of convex functions (see,
e.g. \cite[Corollary~VI.4.3.2]{Lemarechal}) one obtains that 
$$
  \partial F(x_*) = \co\big\{ \nabla f_0(x_*), \nabla f_i(x_*) \bigm| i \in I \colon f_i(x_*) = 0 \big\}.
$$
Thus, $0 \in L(z)$ if and only if there exist $\alpha_i \ge 0$, $i \in I \cup \{ 0 \}$, such that
$$
  \sum_{i = 0}^l \alpha_i \nabla f_i(x_*) = 0, \quad \sum_{i = 0}^l \alpha_i = 1,
$$
and $\alpha_i = 0$ whenever $f_i(x_*) < 0$. Note that $\alpha_0 \ne 0$ due to the fact that MFCQ holds at $x_*$, since
otherwise $0 = \sum_{i = 1}^l \alpha_i \langle \nabla f_i(x_*), y \rangle < 0$, which is impossible. Hence dividing by
$\alpha_0$ and denoting $\lambda_i = \alpha_i / \alpha_0$ one obtains that $0 \in L(z)$ iff there exist 
$\lambda_i \ge 0$, $i \in I$, such that
$$
  \nabla f_0(x_*) + \sum_{i = 1}^l \lambda_i \nabla f_i(x_*) = 0, \quad
  \lambda_i f_i(x_*) = 0, \quad \lambda_i \ge 0 \quad \forall i \in I,
$$
i.e. if and only if KKT optimality conditions are satisfied at $x_*$.	
\end{proof}

\section{A problem of Bolza}
\label{Sect_Bolza}

In some applications it might be extremely difficult to solve the problem 
\begin{equation} \label{GlobOptCond_WhenUseless}
  \min_{(a, v) \in \mathbb{R} \times \mathcal{H}} \: \| (a, v) \|^2 \quad 
  \text{subject to } (a, v) \in \underline{d} f(x_*) + z
\end{equation}
in order to find $(a(z), v(z))$, which renders the global optimality conditions presented above useless. The aim of this
section is to demonstrate that in this case one can utilise different global optimality condition in terms of global
codifferentials. Below we derive these conditions and apply them to a nonsmooth problem of Bolza.

\begin{theorem} \label{Thrm_GlobOptCond_Alternative}
Let $f$ be a DC function, $D f$ be any global codifferential of $f$, and $x_* \in \mathcal{H}$ be a given point. Suppose
that $C \subseteq \overline{d} f(x_*)$ is a nonempty set such that  $\overline{d} f(x_*) = \cl \co C$. Then $x_*$ is a
point of global minimum of the function $f$ if and only if for any $z \in C$ there exists $\xi(z) \ge 0$ such that
$(\xi(z), 0) \in \underline{d} f(x_*) + z$.
\end{theorem}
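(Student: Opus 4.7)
The plan is to reduce the claim to a family of nonnegativity statements for convex functions parameterised by $z \in C$, and then invoke parts~\ref{Stat_BoundedBelow} and~\ref{Stat_InfValue} of Theorem~\ref{Thrm_MinViaGlobalHypodiff}.

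First I would use the defining identity \eqref{GlobalCodiffDef} of the global codifferential to write
\[
  f(x^* + \Delta x) - f(x^*) = \sup_{(a,v) \in \underline{d} f(x^*)} (a + \langle v, \Delta x \rangle) + \inf_{(b,w) \in \overline{d} f(x^*)} (b + \langle w, \Delta x \rangle).
\]
Since $\overline{d} f(x^*) = \cl\co C$ and for fixed $\Delta x$ the map $(b,w) \mapsto b + \langle w, \Delta x \rangle$ is continuous and affine, the infimum over $\overline{d} f(x^*)$ coincides with the infimum over $C$. Applying the elementary equivalence $A + \inf_{z \in C} B(z) \ge 0 \iff A + B(z) \ge 0$ for every $z \in C$, one concludes that $x^*$ is a global minimum of $f$ if and only if for every $z = (b, w) \in C$ one has
\[
  g_z(\Delta x) := \sup_{(a,v) \in \underline{d} f(x^*) + z} (a + \langle v, \Delta x \rangle) \ge 0 \quad \forall \Delta x \in \mathcal{H}.
\]

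Next I would verify that each $g_z$ is a proper closed convex function whose affine support set is $\underline{d} f(x^*) + z$. Closedness and convexity are clear (supremum of continuous affine functions over a closed convex set); properness follows because $\sup_{(a,v) \in \underline{d} f(x^*)} a = 0$ by definition of the global codifferential, so that $g_z(\Delta x) = b + \langle w, \Delta x \rangle + \sup_{(a,v) \in \underline{d} f(x^*)} (a + \langle v, \Delta x \rangle)$ is finite for every $\Delta x$. Having identified the affine support set, I would apply part~\ref{Stat_BoundedBelow} of Theorem~\ref{Thrm_MinViaGlobalHypodiff} to obtain that $g_z$ is bounded below if and only if $(\underline{d} f(x^*) + z) \cap (\mathbb{R} \times \{ 0 \}) \ne \emptyset$, and then part~\ref{Stat_InfValue} to obtain that in this case
\[
  \inf_{\Delta x \in \mathcal{H}} g_z(\Delta x) = \sup\{ \xi \mid (\xi, 0) \in \underline{d} f(x^*) + z \},
\]
the supremum being attained (as noted in the paragraph preceding Theorem~\ref{Thrm_MinViaGlobalHypodiff}, since the intersection is closed and $g_z$ is proper). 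Therefore $g_z \ge 0$ on $\mathcal{H}$ precisely when there exists $\xi(z) \ge 0$ with $(\xi(z), 0) \in \underline{d} f(x^*) + z$, which combined with the reduction from the first paragraph yields both implications.

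The only potentially delicate point is the reduction of the infimum over $\overline{d} f(x^*) = \cl\co C$ to the infimum over $C$, but this is immediate from the continuity and affinity of $(b,w) \mapsto b + \langle w, \Delta x\rangle$. After that, the proof is a direct citation of Theorem~\ref{Thrm_MinViaGlobalHypodiff}; no genuinely new analytical work is required beyond recognising $g_z$ as an auxiliary convex function with a concretely described affine support set.
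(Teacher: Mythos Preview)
Your proposal is correct and follows essentially the same route as the paper's proof: reduce the global minimality of $x^*$ to the nonnegativity of the auxiliary convex functions $g_z(\Delta x) = \sup_{(a,v) \in \underline{d} f(x^*) + z}(a + \langle v, \Delta x\rangle)$ for every $z \in C$, and then read off the characterisation from parts~\ref{Stat_BoundedBelow} and~\ref{Stat_InfValue} of Theorem~\ref{Thrm_MinViaGlobalHypodiff}. You are simply a bit more explicit than the paper in justifying the passage from $\overline{d} f(x^*) = \cl\co C$ to $C$ and in verifying that $g_z$ is proper closed convex with affine support set $\underline{d} f(x^*) + z$.
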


\begin{proof}
Clearly, $x_*$ is a point of global minimum of the function $f$ iff for any $z \in C$ the function
$$
  G_z(x) = \sup_{(a, v) \in \underline{d} f(x_*) + z} (a + \langle v, x \rangle)
$$
is nonnegative (note that $G_z(x) \ge f(x_* + x) - f(x_*)$). Applying the second part of
Proposition~\ref{Thrm_MinViaGlobalHypodiff} one obtains that if $x_*$ is a point of global minimum, then for any $z \in
C$
one has $(\xi(z), 0) \in \underline{d} f(x_*) + z$, where $\xi(z) = \inf_{x \in \mathcal{H}} G_z(x) \ge 0$. Conversely,
if for any $z \in C$ there exists $\xi(z) \ge 0$ such that $(\xi(z), 0) \in \underline{d} f(x_*) + z$, 
then $\inf_{x \in \mathcal{H}} G_z(x) \ge \xi(z) \ge 0$, and $x_*$ is a point of global minimum.	
\end{proof}

With the use of the first part of Proposition~\ref{Thrm_MinViaGlobalHypodiff} and the fact that by the definition of
global
codifferential $f(x) - f(x_*) = \inf_{z \in C} G_z(x - x_*)$ for all $x \in \mathcal{H}$ one can easily obtain 
the following result.

\begin{theorem} \label{Thrm_BoundednessBelow_GlobCodiff}
Let $f$ be a DC function, $D f$ be any global codifferential of $f$, and $x_* \in \mathcal{H}$ be a given point.
Suppose that $C \subseteq \overline{d} f(x_*)$ is a nonempty set such that $\overline{d} f(x_*) = \cl \co C$. Then $f$
is bounded below if and only if there exists $\xi \in \mathbb{R}$ such that
for any $z \in C$ one has $([\xi, + \infty) \times \{ 0 \}) \cap (\underline{d} f(x_*) + z) \ne \emptyset$.
\end{theorem}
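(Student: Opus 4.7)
The plan is to reduce the boundedness-below question for $f$ to the analogous question for a family of convex auxiliary functions, indexed by $z \in C$, to which parts~\ref{Stat_BoundedBelow} and~\ref{Stat_InfValue} of Theorem~\ref{Thrm_MinViaGlobalHypodiff} apply directly.

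First, for each $z = (b, w) \in C$ I would introduce
$$
  g_z(y) = \sup_{(a, v) \in \underline{d} f(x^*) + z} (a + \langle v, y \rangle),
$$
and observe that $g_z$ is a finite closed convex function on $\mathcal{H}$ (it is an affine perturbation of the convex function $y \mapsto f_1(x^* + y) - f_1(x^*)$), with $\underline{d} f(x^*) + z$ serving as one of its affine support sets.

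Second, I would verify the identity $f(x^* + y) - f(x^*) = \inf_{z \in C} g_z(y)$ for all $y \in \mathcal{H}$. By the definition of the global codifferential \eqref{GlobalCodiffDef}, the left-hand side equals
$$
  \sup_{(a, v) \in \underline{d} f(x^*)}(a + \langle v, y \rangle) + \inf_{(b, w) \in \overline{d} f(x^*)}(b + \langle w, y \rangle),
$$
and since the continuous linear functional $(b, w) \mapsto b + \langle w, y \rangle$ has the same infimum over $\overline{d} f(x^*) = \cl \co C$ as over $C$, the right-hand side collapses to $\inf_{z \in C} g_z(y)$.

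Third, this identity yields $\inf_{y \in \mathcal{H}} f(x^* + y) = f(x^*) + \inf_{z \in C} \inf_{y \in \mathcal{H}} g_z(y)$, so $f$ is bounded below iff there exists $\xi \in \mathbb{R}$ with $\inf_{y} g_z(y) \ge \xi$ for every $z \in C$ (the uniformity of $\xi$ in $z$ is costless: in the forward direction one simply takes $\xi = \inf_x f(x) - f(x^*)$, which works since $g_z(y) \ge f(x^* + y) - f(x^*)$ by the identity). Now part~\ref{Stat_BoundedBelow} of Theorem~\ref{Thrm_MinViaGlobalHypodiff} applied to $g_z$ shows that $g_z$ is bounded below iff $(\underline{d} f(x^*) + z) \cap (\mathbb{R} \times \{0\}) \ne \emptyset$, and in that case part~\ref{Stat_InfValue} identifies $\inf_y g_z(y)$ with $\sup\{a \mid (a, 0) \in \underline{d} f(x^*) + z\}$, a supremum which is attained because $\underline{d} f(x^*) + z$ is closed. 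Hence ``$\inf_y g_z(y) \ge \xi$ for all $z \in C$'' is literally ``$([\xi, +\infty) \times \{0\}) \cap (\underline{d} f(x^*) + z) \ne \emptyset$ for all $z \in C$'', completing both implications. No genuine obstacle is expected beyond correctly bookkeeping the double infimum and the attainment of the inner supremum.
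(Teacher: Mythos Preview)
Your proof is correct and follows exactly the route the paper sketches: introduce the convex auxiliary functions $g_z$, use the identity $f(x^*+y)-f(x^*)=\inf_{z\in C} g_z(y)$ (which is precisely what the paper cites just before the theorem), and then read off the conclusion from parts~\ref{Stat_BoundedBelow} and~\ref{Stat_InfValue} of Theorem~\ref{Thrm_MinViaGlobalHypodiff}. The only addition beyond the paper's one-line hint is that you make explicit the attainment of $a_{g_z}$ (so that $\inf_y g_z(y)\ge\xi$ really gives a point in $[\xi,+\infty)\times\{0\}$), which is justified by the closedness of $\underline{d}f(x^*)+z$.
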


Let us present an example in which global optimality conditions from Section~\ref{Sect_GlobOptCond} become too
complicated and unverifiable, while Theorems~\ref{Thrm_GlobOptCond_Alternative} and
\ref{Thrm_BoundednessBelow_GlobCodiff} can be easily applied. This example also demonstrates how one can compute a
global codifferential of a variational functional (cf.~\cite{Dolgopolik_CalcVar}). Namely, let us analyse the following
nonsmooth problem of Bolza:
\begin{equation} \label{BolzaProblem}
  \min \: \mathcal{I}(u) = u(0) - e^{-1} u(1) + \int_0^1 \max\big\{ |u'(x)| - |u(x)|, 0 \big\} \, dx.
\end{equation}
Here $u$ is from the Sobolev space $W^{1, 1}(0, 1)$. As was demonstrated in
\cite{IoffeRockafellar,Dolgopolik_CalcVar}, the function $u_*(x) = \theta e^x$ with $\theta > 0$ satisfies several
necessary optimality conditions for problem \eqref{BolzaProblem}. Our main goal is to demonstrate that this solution is
not globally optimal. Furthermore, we will show that the functional $\mathcal{I}(\cdot)$ is unbounded below and thus
does not attain a global minimum.

To convert the problem to the Hilbert space setting, below we suppose that $u \in H^1(0, 1) = W^{1, 2}(0, 1)$. Clearly,
if $u_*$ is not a globally optimal solution in $H^1(0, 1)$, then it is not a globally optimal solution in 
$W^{1, 1}(0, 1)$. Let us compute a global codifferential mapping of the restriction of the functional $\mathcal{I}$ to
the Hilbert space $H^1(0, 1)$. To this end, for any $x \in [0, 1]$ introduce the function
$$
  f_x(u, \xi) = \max\big\{ |\theta e^x + \xi| - |\theta e^x + u|, 0 \big\}
  = \max\big\{ |\theta e^x + \xi|, |\theta e^x + u| \big\} - |\theta e^x + u|.
$$
Applying Proposition~\ref{Prp_GlobalCodiffCalc} one obtains that the pair 
$D f_x(0, 0) = [\underline{d} f_x(0, 0), \overline{d} f_x(0, 0)]$ with
\begin{align} \label{IntegrandGlobalCodiff}
  \underline{d} f_x(0, 0) &= \co\left\{ \begin{pmatrix} 0 \\ 1 \\ 0 \end{pmatrix}, 
  \begin{pmatrix} - 2 \theta e^x \\ -1 \\ 0 \end{pmatrix}, \begin{pmatrix} 0 \\ 0 \\ 1 \end{pmatrix},
  \begin{pmatrix} - 2 \theta e^x \\ 0 \\ -1 \end{pmatrix} \right\}, \\
  \overline{d} f_x(0, 0) &= \co\left\{ \begin{pmatrix} 0 \\ -1 \\ 0 \end{pmatrix},
  \begin{pmatrix} 2 \theta e^x \\ 1 \\ 0 \end{pmatrix} \right\}. \label{IntegrandGlobalHyperdiff}
\end{align}
is a global codifferential of $f_x$ at $(0, 0)$. Then by the definition of global codifferential and the fact that
$f_x(0, 0) = 0$ one gets that
\begin{multline*}
  \mathcal{I}(u_* + u) - \mathcal{I}(u_*) = u(0) - e^{-1} u(1) 
  + \int_0^1 \Big( \max_{(a, v) \in \underline{d}_x f(0, 0)} \big(a + v_1 u(x) + v_2 u'(x) \big) \\
  + \min_{(b, w) \in \overline{d}_x f(0, 0)} \big( b + w_1 u(x) + w_2 u'(x) \big) \Big) \, dx
\end{multline*}
for any $u \in H^1(0, 1)$. Clearly, the mapping $x \mapsto \underline{d} f_x(0, 0)$ is measurable. Therefore, by the
Filippov Theorem (see, e.g. \cite[Thrm~8.2.10]{AubinFrankowska}) for any $u \in H^1(0, 1)$ there exists a measurable
selection $(a(x), v_1(x), v_2(x))$ of the map $x \mapsto \underline{d} f_x(0, 0)$ such that
$$
  \max_{(a, v) \in \underline{d} f_x(0, 0)} (a + v_1 u(x) + v_2 u'(x) \rangle) =
  a(x) + v_1(x) u(x) + v_2(x) u'(x)
$$
for a.e. $x \in (0, 1)$. Hence for any $u \in H^1(0, 1)$ one has
\begin{multline} \label{BolzaPr_IntermediateCodiff}
  \mathcal{I}(u_* + u) - \mathcal{I}(u_*) = u(0) - e^{-1} u(1) 
  + \max \Big( \int_0^1 \big( a(x) + v_1(x) u(x) + v_2(x) u'(x) \big) \, dx \Big) \\
  + \min \Big( \int_0^1 \big( b(x) + w_1(x) u(x) + w_2(x) u'(x) \big) \, dx \Big),
\end{multline}
where the maximum is taken over all measurable selections of the map $x \mapsto \underline{d} f_x(0, 0)$, and 
the minimum is taken over all measurable selections of the map $x \mapsto \overline{d} f_x(0, 0)$. 

Recall that $u \in H^1(0, 1)$ iff $u(x) = u(0) + \int_0^x \eta(s) ds$ for some $\eta \in L_2(0, 1)$ (see, e.g.
\cite{Leoni}). Therefore, instead of $\mathcal{I}(u)$ one can consider the functional 
$\mathcal{J} \colon \mathbb{R} \times L_2(0, 1) \to \mathbb{R}$ defined as
$\mathcal{J}(u_0, \eta) = \mathcal{I}(u)$, where $u(x) = u_0 + \int_0^x \eta(s) \, ds$. Denote 
$\eta_*(x) = \theta e^x$. Applying \eqref{BolzaPr_IntermediateCodiff} and integrating by parts one obtains that
\begin{align*}
  \mathcal{J}(\theta + u_0, \eta_* + \eta) - \mathcal{J}(\theta, \eta_*)
  &= \max_{(A, v_0, v) \in \underline{d} \mathcal{J}(\theta, \eta_*)} 
  \big( A + v_0 u_0 + \langle v, \eta \rangle \big) \\
  &+ \min_{(B, w_0, w) \in \overline{d} \mathcal{J}(\theta, \eta_*)} \big( B + w_0 u_0 + \langle w, \eta \rangle \big),
\end{align*}
where $\langle v, \eta \rangle = \int_0^1 v(x) \eta(x) \, dx$ is the inner product in $L_2(0, 1)$,
\begin{multline*}
  \underline{d} \mathcal{J}(\theta, \eta_*) 
  = \Big\{ (A, v_0, v) \in \mathbb{R} \times \mathbb{R} \times L_2(0, 1) \Bigm| A = \int_0^1 a(x) \, dx, \\ 
  v_0 = 1 - e^{-1} + \int_0^1 v_1(x) \, dx, \:
  v(x) = \int_x^1 v_1(s) \, ds + v_2(x) - e^{-1}, \: \\
  (a(\cdot), v_1(\cdot), v_2(\cdot)) \text{ is a measurable selection of the map } x \mapsto \underline{d} f_x(0, 0)
  \Big\},
\end{multline*}
and
\begin{multline*}
  \overline{d} \mathcal{J}(\theta, \eta_*) 
  = \Big\{ (B, w_0, w) \in \mathbb{R} \times \mathbb{R} \times L_2(0, 1) \Bigm| B = \int_0^1 b(x) \, dx, \\
  w_0 = \int_0^1 w_1(x) \, dx, \:
  w(x) = \int_x^1 w_1(s) \, ds + w_2(x), \\
  (b(\cdot), w_1(\cdot), w_2(\cdot)) \text{ is a measurable selection of the map } x \mapsto \overline{d} f_x(0, 0)
  \Big\}.
\end{multline*}
The sets $\underline{d} \mathcal{J}(\theta, \eta_*)$ and $\overline{d} \mathcal{J}(\theta, \eta_*)$ are obviously
convex. Let us verify that they are closed. For the sake of shortness, we consider only the set 
$\underline{d} \mathcal{J}(\theta, \eta_*)$. 

Note that the set $K$ of measurable selections of the map $x \mapsto \underline{d} f_x(0, 0)$ is obviously convex,
closed and bounded in $L_2(0, 1)$. Therefore it is weakly compact in $L_2(0, 1)$. It is easily seen that 
$\underline{d} \mathcal{J}(\theta, \eta_*)$ is the image of the set $K$ under a continuous map from the space 
$L_2(0, 1)$ endowed with the weak topology to the space $\mathbb{R} \times \mathbb{R} \times L_2(0, 1)$ endowed with the
weak topology as well. Hence the set $\underline{d} \mathcal{J}(\theta, \eta_*)$ is weakly compact, which implies that
it is closed in the norm topology due to the fact that this set is convex. 

Thus, the pair
$D \mathcal{J}(\theta, \eta_*) = [\underline{d} \mathcal{J}(\theta, \eta_*), \overline{d} \mathcal{J}(\theta, \eta_*)]$
is a global codifferential of $\mathcal{J}$ at the point $(\theta, \eta_*)$. Let us verify that this point is not
a global minimizer of $\mathcal{J}$ with the use of Theorem~\ref{Thrm_GlobOptCond_Alternative}.

\begin{remark}
It should be noted that a direct application of the global optimality conditions from Theorem~\ref{Thrm_GlobOptCond} to
problem \eqref{BolzaProblem} is very difficult, since it is unclear how to compute points $(a(z), v(z))$
defined in \eqref{GlobOptCond_WhenUseless} for this problem.
\end{remark}

The mapping $(b(x), w_1(x), w_2(x)) = (2 \theta e^x, 1, 0)$ is a measurable selection of the map
$x \mapsto \overline{d} f_x(0, 0)$ (see~\eqref{IntegrandGlobalHyperdiff}). Therefore, the point 
$z_* = (2\theta(e - 1), 1, w(\cdot))$ with $w(x) \equiv 1 - x$ belongs to $\overline{d} \mathcal{J}(\theta, \eta_*)$.
With the use of \eqref{IntegrandGlobalCodiff} and the Filippov Theorem one can easily check that any measurable
selection $(a(\cdot), v_1(\cdot), v_2(\cdot))$ of the map $x \mapsto \underline{d} f_x(0, 0)$ has the form
$$
  \begin{pmatrix} a(x) \\ v_1(x) \\ v_2(x) \end{pmatrix}
  = \begin{pmatrix} - 2 \theta (\alpha_2(x) + \alpha_4(x)) e^x \\ \alpha_1(x) - \alpha_2(x) 
  \\ \alpha_3(x) - \alpha_4(x) \end{pmatrix}
$$
for a.e. $x \in [0, 1]$ and for some $\alpha = (\alpha_1, \alpha_2, \alpha_3, \alpha_4) \in S_4$, where the set 
$S_4 \subset (L_2(0, 1))^4$ consists of all those $(\alpha_1, \alpha_2, \alpha_3, \alpha_4)$ for which all $\alpha_i$
are nonnegative and $\alpha_1(x) + \alpha_2(x) + \alpha_3(x) + \alpha_4(x) = 1$ for a.e. $x \in (0, 1)$. Therefore 
$[A, v_0, v] \in \underline{d} \mathcal{J}(\theta, \eta_*)$ iff there exists $\alpha \in S_4$ such that
\begin{gather*}
  A = - 2 \theta \int_0^1 e^x (\alpha_2(x) + \alpha_4(x)) \, dx, \quad
  v_0 = 1 - e^{-1} + \int_0^1 (\alpha_1(x) - \alpha_2(x)) \, dx, \\
  v(x) = - e^{-1} + \int_x^1 (\alpha_1(s) - \alpha_2(s)) \, ds + \alpha_3(x) - \alpha_4(x)
\end{gather*}
for a.e. $x \in [0, 1]$. Consequently, the point $(\xi, 0, 0)$ belongs to 
$\underline{d} \mathcal{J}(\theta, \eta_*) + z_*$ for some $\xi \in \mathbb{R}$ iff there exists $\alpha \in S_4$ such
that
\begin{align*}
  - 2 \theta \int_0^1 e^x (\alpha_2(x) + \alpha_4(x)) \, dx + 2\theta(e - 1) &= \xi, \\
  \int_0^1 (\alpha_1(x) - \alpha_2(x)) \, dx + 2 - e^{-1} &= 0, \\
  \int_x^1 (\alpha_1(s) - \alpha_2(s)) \, ds + \alpha_3(x) - \alpha_4(x) - e^{-1} + 1 - x &= 0
\end{align*}
for a.e. $x \in [0, 1]$. However, note that
$$
  \int_0^1 (\alpha_1(x) - \alpha_2(x)) \, dx \ge - \int_0^1 \alpha_2(x) \, dx \ge -1 > e^{-1} - 2
$$
due to the fact that $\alpha_1(x) \ge 0$ and $\alpha_2(x) \le 1$ for a.e. $x \in [0, 1]$. Thus,
the sets $\mathbb{R} \times \{ 0 \} \times \{ 0 \}$ and $\underline{d} \mathcal{J}(\theta, \eta_*) + z_*$ do not
intersect, which by Theorems~\ref{Thrm_GlobOptCond_Alternative} and \ref{Thrm_BoundednessBelow_GlobCodiff} implies that
the pair $(\theta, \eta_*(\cdot))$ with $\eta_*(x) = \theta e^x$ is not a point of global minimum of $\mathcal{J}$, and
this functional is unbounded below. Consequently, the function $u_*(x) = \theta e^x$ is not a global
minimizer of $\mathcal{I}$, and this functional is unbounded below as well.

\section{Conclusions}

In this article we obtained new necessary and sufficient global optimality conditions for DC optimization problems in
terms of global codifferentials. These optimality conditions are closely related to the method of codifferential
descent and are somewhat constructive, in the sense that they allow one to find ``global descent'' directions at 
non-optimal points. On the other hand, a direct usage of the global optimality conditions requires the knowledge of a
global codifferential of a DC function, and global codifferentials can be relatively easily computed (and manipulated
with) only in the piecewise affine case. Nevertheless, it seems possible to propose new methods for general DC
optimization problems utilising an approximation of global codifferential (cf. codifferential method \cite{BagUgon} and
aggregate codifferential method \cite{TorBagKar}). A development and analysis of such methods are interesting topics of
future research.

\bibliographystyle{abbrv}  
\bibliography{GlobOptimCond_bibl}

\end{document}